\newtheorem{theorem}{Theorem}[section]
\newtheorem{corollary}[theorem]{Corollary}
\newtheorem{lemma}[theorem]{Lemma}
\newtheorem{proposition}[theorem]{Proposition}
\newtheorem{definition}[theorem]{Definition}
\newtheorem{remark}[theorem]{Remark}
\numberwithin{equation}{section}
\def\dist{{\mathop {{\rm dist\, }}}}
\def\square{{\vcenter{\vbox{\hrule height.3pt
        \hbox{\vrule width.3pt height5pt \kern5pt
           \vrule width.3pt}
        \hrule height.3pt}}}}
\def\sD {{\cal D}} \def\sE {{\cal E}} \def\sF {{\cal F}}
  \def\sL {{\cal L}}
\def\wt{\widetilde}
\def\norm#1{{\Vert #1 \Vert}}
\def\del{{\partial}}
\def\lam{{\lambda}}
\def\bee{\begin{equation}}
\def\bet{\begin{theorem}}
\def\bep{\begin{proposition}}
\def\bel{\begin{lemma}}
\def\bec{\begin{corollary}}
\def\bed{\begin{definition}}
\def\ber{\begin{remark}}
\def\eee{\end{equation}}
\def\eet{\end{theorem}}
\def\eep{\end{proposition}}
\def\eel{\end{lemma}}
\def\eec{\end{corollary}}
\def\eed{\end{definition}}
\def\eer{\end{remark}}
\def\R{{\mathbb R}}
\def\Z{{\mathbb Z}}
\def\lam{{\lambda}}
\def\th{{\theta}}
\def\al{{\alpha}}
\def\grad{{\nabla}}
\def\eps{\varepsilon}
\def\vp{\varphi}
\def\norm#1{\Vert #1 \Vert}
 \def\qq {\qquad}
\def\del{{\partial}}
\def\wt{\widetilde}
\def\ni{\noindent }
\def\ms{\medskip}
\def\dist{{\mathop {{\rm dist\, }}}}
\def\square{{\vcenter{\vbox{\hrule height.3pt
        \hbox{\vrule width.3pt height5pt \kern5pt
           \vrule width.3pt}
        \hrule height.3pt}}}}
\def\tfrac#1#2{{\textstyle {\frac{#1}{#2}}}}
\def\tlint{{- \kern-0.85em \int \kern-0.2em}}  % for textstyle
\def\dlint{{- \kern-1.05em \int \kern-0.4em}}  % for displays
\def\sD {{\cal D}} \def\sE {{\cal E}} \def\sF {{\cal F}}
  \def\sL {{\cal L}}
\def\nn{{\nonumber}}
\begin{document}

\title{Meyers inequality and strong stability for stable-like operators}

\author{Richard F. Bass and Hua Ren\footnote{Research of both authors partially supported by NSF grant
DMS-0901505.}}

\date{\today}

\maketitle

\begin{abstract}
 Let $\al\in (0,2)$,
let
$$\sE(u,u)=\int_{\R^d}\int_{\R^d} (u(y)-u(x))^2\frac{A(x,y)}{|x-y|^{d+\al}}
\, dy\, dx$$
be the Dirichlet form for a stable-like operator, let
$$\Gamma u(x)=\Big(\int_{\R^d} (u(y)-u(x))^2\frac{A(x,y)}{|x-y|^{d+\al}}
\, dy\Big)^{1/2},$$
let $L$ be the associated infinitesimal generator, 
and  suppose $A(x,y)$ is jointly measurable, symmetric, bounded, and bounded below
by a positive constant. We prove that if $u$ is the weak solution to
$Lu=h$, then $\Gamma u\in L^p$ for some $p>2$.
This is the analogue of an inequality of Meyers for solutions to divergence
form elliptic equations.
 As an
application, we prove strong stability results for stable-like 
operators. If $A$ is perturbed slightly, we give explicit bounds
on how much the semigroup and fundamental solution are perturbed.

\vskip.2cm
\noindent \emph{Subject Classification: Primary 45K05; Secondary 35B65}   
\end{abstract}

\section{Introduction}

Nowadays many researchers who use mathematical models consider situations
where discontinuities can occur. In analysis terms, this means they need to look
at integro-differential operators as well as differential operators. 
Integro-differential operators are not nearly as well understood
as their differential counterparts, and to study them it makes sense to first 
look at
the extreme case, that of purely integral operators.

In this paper we focus on a reasonably large class of such
integral operators, the stable-like operators. These are operators that
bear the same relationship to the fractional Laplacian as divergence
form operators do to the Laplacian.

To describe our results, let us first recall some facts about divergence
form operators. These have the form
$${\sL}_df(x)=\sum_{i,j=1}^d \frac{\del}{\del x_i}\Big(a_{ij}(\cdot) \frac{\del f}{\del x_j}
(\cdot)\Big)(x).$$
These have been studied even when the $a_{ij}$ are only bounded and
measurable, and to make sense of the operator in this case, one looks
at the corresponding Dirichlet form:
$$\sE_d(f,f)=\int_{\R^d} \sum_{i,j=1}^d a_{ij}(x)\frac{\del f}{\del x_i}(x)
\frac{\del f}{\del x_j}(x)\, dx.$$
One says that $u$ is a weak solution of ${\sL}_du=h$ if $\sE_d(u,v)=
-(h,v)$ for all $v$ in a suitably large class, where $(h,v)=\int_{\R^d}
h(x) v(x)\, dx$.

An inequality of Meyers (\cite{Meyers}) says that if the $a_{ij}$ are uniformly 
elliptic and $u$ is a weak
solution to ${\sL}_du=h$, then not only is $\grad u$ locally
in $L^2$ but it is locally in $L^p$ for some $p>2$. 

The Meyers inequality has many applications. One is to the stability
of solutions to ${\sL}_du=h$. Suppose one perturbs the coefficients $a_{ij}$
slightly. How does this affect the associated semigroup? What about the
fundamental solution associated with the operator ${\sL}_d$? These are 
natural questions since the coefficients $a_{ij}$ might themselves be only 
estimated or approximated. In \cite{Chenetal}
these issues were resolved, with an explicit bound on how large the
difference between the 
semigroups and solutions associated with two operators ${\sL}_d$ and $\wt {\sL}_d$
can be in terms of the difference of the coefficients $a_{ij}$ and
$\wt a_{ij}$.

Our purpose in this paper is to examine the analogues of these results
for stable-like processes. The operator we consider is
$${\sL} f(x)=\int_{\R^d} (f(y)-f(x))\frac{A(x,y)}{|x-y|^{d+\al}}\, dy,$$
where $\al\in (0,2)$ and $A(x,y)$ is bounded, symmetric, jointly measurable,
and bounded below. As in the case for divergence form operators, it is 
useful to
look at  the associated
Dirichlet form
$$\sE(f,f)=\int_{\R^d}\int_{\R^d} (f(y)-f(x))^2 \frac{A(x,y)}{|x-y|^{d+\al}}\, dy\, dx.$$

The bulk of this paper is devoted to proving a Meyers inequality for
weak solutions to ${\sL} u=h$ when $h$ is in $L^2$. Define
\bee\label{I1}
\Gamma u(x)=\Big( \int_{\R^d}\frac{(u(y)-u(x))^2}{|x-y|^{d+\alpha}}\, dy\Big)^{\frac12}.
\eee
Our main result is that there exists $p>2$ such that the $L^p$ norm of $\Gamma u$
is bounded in terms of the $L^2$ norms of $u$ and $h$; see Theorem \ref{meyers}.

Once one has the Meyers inequality for $\sE$, strong stability
results can be proved along the lines of \cite{Chenetal}. Suppose
$\wt \sE $ is defined in terms of $\wt A(x,y)$ analogously to \eqref{I1}. 
We obtain explicit bounds on the $L^p$ norm of $P_tf-\wt P_tf$
and on the $L^\infty$ norm of $p(t,x,y)-\wt p(t,x,y)$ in terms
of
 $$G(x)=\sup_{y\in R^d}|A(x,y)-\wt{A}(x,y)|,$$ where $P_t$ and $p(t, \cdot,
\cdot)$ are the semigroup and fundamental solution associated with ${\sL}$
and $\wt P_t$ and $\wt p(t,\cdot,\cdot)$ are defined similarly. See Theorems
\ref{THE3.2}, \ref{THE4.2A}, and \ref{THE4.3A}.

Our proof of the Meyers inequality begins by first
proving
a Caccioppoli inequality.
However there are considerable differences
between the stable-like case and the divergence form case. 
For example, as one would expect,
our Caccioppoli inequality is not a local one; the integral of $|\Gamma u|^2$
on a ball depends on values of $u$ far outside the ball. This makes proving
the Meyers inequality considerably more difficult and requires
the introduction of some new ideas, such as localization, use
of the Hardy-Littlewood maximal function, and use of the Sobolev-Besov
embedding theorem.

For other papers on stable-like operators and on closely related operators,
see \cite{MGK} --
%, \cite{nlharn}, \cite{BGK}, \cite{upjump}, \cite{jpharn},
%\cite{Bass-Levin}, \cite{mosernl}, \cite{conhar}, \cite{huili},
\cite{bkk}, 
\cite{Chen11} --
%, \cite{censor2}, \cite{Chen-Kumagai},
%\cite{CK08}, 
\cite{CKarxiv}, \cite{foondun1}, \cite{foondun2},
\cite{kolo}, and \cite{song-vondracek}.

\ni {\bf Acknowledgment.} We would like to thank M.~Kassmann for some very 
helpful discussions.

There is an error in the published version of this paper. The scaling argument
appealed to in the line following (4.11) does not work. This was pointed out
to us by  T.~Mengesha. What is needed is to restrict attention to $R$ less than $4\sqrt d$.
This version of the paper corrects the error.

\section{Preliminaries}\label{prelim}

We use the letter $c$ with or without subscripts to denote a finite
positive constant whose exact value is unimportant and which can vary from
place to place.
We use $B(x,r)$ for the open ball in $\R^d$ with center $x$ and radius $r$.
When the center is clear from the context, we will also write $B_r$. The Lebesgue
measure of $B(x,r)$ will be denoted $|B(x,r)|$.
We write $(u,v)$ for $\int_{\R^d} u(x)v(x)\, dx$.

Let $\al\in (0,2)$ and suppose the dimension $d$ is greater than $\al$.
We let $A(x,y)$ be a jointly measurable symmetric function on $\R^d\times \R^d$
and suppose
there exists $\Lambda>0$ such that
$$\Lambda^{-1}\le A(x,y)\le \Lambda, \qq x,y\in \R^d.$$

We define the Dirichlet form $\sE$ with domain $\sD(\sE)=\sF$  by
\begin{align}
\sE(u,v)&=\int_{\R^d}\int_{\R^d}(u(y)-u(x))(v(y)-v(x))\frac{A(x,y)}{|x-y|^{d+\alpha}}\, dy\, dx,\phantom{\Bigg[}\label{dfno}\\
\sF&=\{u\in L^2(\R^d):\sE(u,u)<\infty\}.\nn  
\end{align}

Observe that $\sF=W^{\alpha/2,2}(\R^d)$, the fractional
Sobolev space of order $\al/2$,
defined by
\begin{align*}
W^{{\alpha/2},2}(\R^d)
=\Big\{u\in L^2(\R^d): \int_{\R^d}\int_{\R^d}\frac{(u(x)-u(y))^2}{|x-y|^{d+\alpha}}\, dy\, dx<{\infty}\Big\}.
\end{align*}
See \cite{Adams} for more details. It is well known that  $(\sE,\sF)$ is a  
regular Dirichlet form on $L^2(\R^d)$. The strong Markov symmetric process $X$ 
 associated with $(\sE,\sF)$ is called a  stable-like process.
Let $\{P_t\}_{t\ge 0}$  be the semigroup corresponding to $(\sE,\sF)$. 

For $u\in \sF$ define
\bee\label{Pr-E101}
\Gamma u(x)= \Big(\int_{\R^d}\frac{(u(y)-u(x))^2}{|x-y|^{d+\alpha}}\, dy\Big)^{\frac12}.
\eee
Since $\int |\Gamma u(x)|^2\, dx=\sE(u,u)<\infty$, then $\Gamma u\in L^2$,
and in particular $\Gamma u(x)$ exists for almost every $x$.

Let ${\sL}$ be the infinitesimal generator corresponding to $\sE$ (see 
\cite{FOT}). There are a number of known results that follow from the
spectral theorem.  We collect these in the following lemma  for the convenience
of the reader.

\begin{lemma}\label{Grandlemma} (1) For $t>0,\ f \in L^2(D)$, we have
\[\sE(P_tf,P_tf)\le ct^{-1}\|f\|_2^2.\]
(2) If $g\in L^2$, then $P_t g$ is in $\sD({\sL})$,
the domain of $\sL$.\\
(3) If $f,g\in \sF$, then 
$$\frac{d}{dt}(P_tf,g)=-\sE(P_tf,g).$$
(4)  If $f\in \sF$, then 
\bee\label{Pr44}
\sE(P_tf,P_tf)\le \sE(f,f).
\eee
\end{lemma}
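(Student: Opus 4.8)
The plan is to derive all four statements from the spectral representation of the self-adjoint operator associated with $(\sE,\sF)$. Write $P_t=e^{-t\sL}$, and for $f\in L^2$ expand $f$ in the spectral resolution $\{E_\lambda\}_{\lambda\ge 0}$ of $\sL$, so that $\|f\|_2^2=\int_0^\infty d\|E_\lambda f\|_2^2$ and, for $f\in\sF=\sD(\sL^{1/2})$, $\sE(f,f)=\|\sL^{1/2}f\|_2^2=\int_0^\infty \lambda\, d\|E_\lambda f\|_2^2$. For $(1)$, I would write $\sE(P_tf,P_tf)=\int_0^\infty \lambda e^{-2t\lambda}\, d\|E_\lambda f\|_2^2$ and use the elementary bound $\sup_{\lambda\ge0}\lambda e^{-2t\lambda}=1/(2et)\le c t^{-1}$, so the integral is at most $c t^{-1}\|f\|_2^2$; note this needs no hypothesis beyond $f\in L^2$ (the `$D$' in the statement should just be read as $\R^d$). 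For $(2)$, since $g\in L^2$ one has $P_t g=\int_0^\infty e^{-t\lambda}\, dE_\lambda g$, and $\int_0^\infty \lambda^2 e^{-2t\lambda}\, d\|E_\lambda g\|_2^2<\infty$ because $\sup_\lambda \lambda^2 e^{-2t\lambda}<\infty$; hence $P_t g\in\sD(\sL)=\sD(\sL^{2})^{\,}$... more precisely $P_tg\in\sD(\sL)$ since the integral defining $\|\sL P_t g\|_2^2$ converges.

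For $(3)$, I would first treat $f,g\in\sD(\sL)$: then $t\mapsto P_tf$ is differentiable in $L^2$ with $\frac{d}{dt}P_tf=-\sL P_tf$, so $\frac{d}{dt}(P_tf,g)=-(\sL P_tf,g)=-(\sL^{1/2}P_tf,\sL^{1/2}g)=-\sE(P_tf,g)$, using self-adjointness of $\sL^{1/2}$ and that $P_t$ commutes with $\sL^{1/2}$. To pass to general $f,g\in\sF$, either invoke part $(2)$ (for any $s>0$, $P_sf\in\sD(\sL)$, so the identity holds for $P_{t}(P_s f)=P_{t+s}f$ and one lets $s\downarrow0$ using continuity of $P_s$ on $\sF$ together with the bound in $(1)$), or argue directly via the spectral calculus: $(P_tf,g)=\int_0^\infty e^{-t\lambda}\, d\angel{E_\lambda f,g}$, and differentiation under the integral sign is justified by the bound $|\lambda e^{-t\lambda}|\le c_t$ together with the finiteness of $\int_0^\infty \lambda^{1/2}\,d\|E_\lambda f\|\cdot\lambda^{1/2}\,d\|E_\lambda g\|$ coming from $f,g\in\sF$ and Cauchy--Schwarz. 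For $(4)$, again use the spectral representation: $\sE(P_tf,P_tf)=\int_0^\infty \lambda e^{-2t\lambda}\, d\|E_\lambda f\|_2^2\le \int_0^\infty \lambda\, d\|E_\lambda f\|_2^2=\sE(f,f)$ since $e^{-2t\lambda}\le 1$.

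The content here is entirely standard semigroup/spectral theory, so there is no genuine obstacle; the only point requiring a little care is the passage from the dense subspace $\sD(\sL)$ to all of $\sF$ in part $(3)$, which is where one must quote closedness of $\sE$ (equivalently, closedness of $\sL^{1/2}$) and the strong continuity of $\{P_t\}$ on $(\sF,\sE_1)$, $\sE_1(u,u):=\sE(u,u)+\|u\|_2^2$. I would present the spectral-theorem computation as the main line of argument, since it gives $(1)$, $(2)$, $(4)$ and the core of $(3)$ uniformly, and dispatch the density step in $(3)$ with a sentence.
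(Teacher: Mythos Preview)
Your proposal is correct and follows essentially the same spectral-theoretic route as the paper: all four parts are obtained from the spectral resolution $\{E_\lambda\}$ of $-\sL$, using the elementary bounds $\lambda e^{-2\lambda t}\le ct^{-1}$, $\lambda^2 e^{-2\lambda t}\le c_t$, and $e^{-2\lambda t}\le 1$ for (1), (2), (4) respectively, and differentiating $(P_tf,g)=\int_0^\infty e^{-\lambda t}\, d(E_\lambda f,g)$ for (3). The paper handles (3) by the direct spectral computation you list as your second option (no density step is needed), and for (2) it verifies the difference quotient converges in $L^2$ rather than invoking the spectral characterization of $\sD(\sL)$, but these are cosmetic variations of the same argument.
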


The proof of this lemma is given in Section \ref{proofoflemma}.

\section{Caccioppoli inequality}

In this section, we will derive a Caccioppoli inequality for the weak solution of the equation
\begin{align}
&\sL u(x)= h(x),\qq x\in \R^d,
\label{BE01}
\end{align}
where $h\in L^2(\R^d)$.
A function $u\in W^{\frac{\alpha}{2},2}(\R^d)$ is called a weak solution of  \eqref{BE01} if
\begin{align}
\sE(u,v)= -(h,v)\ \ \textnormal{for all}\ v\in W^{\frac{\alpha}{2},2}(\R^d),
\label{BE02}
\end{align}
where $(h,v)=\int h(x)v(x)\, dx$.

\begin{theorem}\label{THM2.1} Let $x_0\in \R^d$. Suppose $u(x)$ satisfies
 \eqref{BE02}. There exists a constant $c_1$ depending only
on $\Lambda, \al,$ and $d$  such that
\begin{align}
\int_{B_{R/2}}\int_{\R^d}&(u(y)-u(x))^2\frac{A(x,y)}{|x-y|^{d+\alpha}}\, dy\, dx\nn\\
&\le c_1\int_{\R^d}u^2(y)\psi(y)\, dy+ \int_{B_R}|h(y)u(y)|\, dy,
\label{DE1}
\end{align}
where 
\[\psi(x)= {R^{-\alpha}}\wedge\frac{R^d}{|x-x_0|^{d+\alpha}}.\]
\end{theorem}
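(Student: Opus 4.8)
The plan is to test the weak formulation \eqref{BE02} against $v = u\varphi^2$, where $\varphi$ is a suitable cutoff function equal to $1$ on $B_{R/2}$, supported in $B_R$, with $\|\grad\varphi\|_\infty \le c/R$. The point is to extract the ``good'' term $\int_{B_{R/2}}\int_{\R^d}(u(y)-u(x))^2 A(x,y)|x-y|^{-d-\alpha}\,dy\,dx$ from $\sE(u,u\varphi^2)$ and control the error terms. First I would verify that $v = u\varphi^2 \in W^{\alpha/2,2}(\R^d)$ so that it is a legitimate test function; this follows because $\varphi$ is bounded Lipschitz with compact support and $u \in W^{\alpha/2,2}$, using the product-type estimate $|u(y)\varphi(y)^2 - u(x)\varphi(x)^2| \le \varphi(y)^2|u(y)-u(x)| + |u(x)|\,|\varphi(y)^2-\varphi(x)^2|$ together with $|\varphi(y)^2-\varphi(x)^2|\le 2\|\varphi\|_\infty\|\grad\varphi\|_\infty|x-y| \wedge 2\|\varphi\|_\infty^2$.

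The heart of the computation is the algebraic identity, valid pointwise,
\begin{align}
(u(y)-u(x))\big(u(y)\varphi(y)^2 - u(x)\varphi(x)^2\big)
&= \tfrac12\varphi(y)^2(u(y)-u(x))^2 + \tfrac12\varphi(x)^2(u(y)-u(x))^2 \nn\\
&\q + \tfrac12\big(u(y)^2 - u(x)^2\big)\big(\varphi(y)^2 - \varphi(x)^2\big) \nn\\
&\q - \tfrac12(u(y)-u(x))^2\big(\varphi(y)-\varphi(x)\big)^2,\nn
\end{align}
or some variant thereof; the precise bookkeeping matters less than the structure. Integrating against $A(x,y)|x-y|^{-d-\alpha}$ and using $A \ge \Lambda^{-1}$ on the first two (nonnegative) terms bounds $\int_{B_{R/2}}\int_{\R^d}(u(y)-u(x))^2 A(x,y)|x-y|^{-d-\alpha}\,dy\,dx$ by a constant times $-\sE(u,u\varphi^2) = (h, u\varphi^2)$ plus two error integrals: a ``cross'' term involving $(u(y)^2-u(x)^2)(\varphi(y)^2-\varphi(x)^2)$ and a ``quadratic in the cutoff'' term involving $(u(y)-u(x))^2(\varphi(y)-\varphi(x))^2$. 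The term $(h,u\varphi^2)$ is dominated by $\int_{B_R}|h(y)u(y)|\,dy$ since $0\le\varphi\le 1$ and $\supp\varphi\subset B_R$.

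The main obstacle is estimating the two error integrals and showing they are bounded by $c\int_{\R^d} u^2(y)\psi(y)\,dy$ with the stated weight $\psi(x) = R^{-\alpha}\wedge R^d|x-x_0|^{-d-\alpha}$; this is exactly where the \emph{nonlocal} nature of the problem shows up, since $u$ outside $B_R$ contributes. For the quadratic-in-cutoff term, I split the $y$-integral into $|y-x|\le R$, where $(\varphi(y)-\varphi(x))^2\le c|x-y|^2/R^2$ gives an integrand comparable to $|x-y|^{2-d-\alpha}/R^2$ with a convergent integral producing $cR^{-\alpha}$, and $|y-x|>R$, where $(\varphi(y)-\varphi(x))^2\le c$ and $\int_{|y-x|>R}|x-y|^{-d-\alpha}\,dy \le cR^{-\alpha}$; in both regimes one uses $(u(y)-u(x))^2\le 2u(y)^2+2u(x)^2$ and, after integrating $x$ over $B_R$ (note $\varphi(x)-\varphi(y)=0$ unless $x\in B_R$ or $y \in B_R$), the tail decay $|x-x_0|^{-d-\alpha}$ in $\psi$ arises from estimating, for $y$ far from $x_0$ and $x\in B_R$, the quantity $\int_{B_R}|x-y|^{-d-\alpha}\,dx \le c R^d|y-x_0|^{-d-\alpha}$. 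The cross term is handled similarly, writing $u(y)^2-u(x)^2 = (u(y)-u(x))(u(y)+u(x))$ and applying Cauchy--Schwarz (or Young's inequality $ab\le \eps a^2 + \eps^{-1}b^2$) to absorb a small multiple of $\int\int\varphi^2(u(y)-u(x))^2A|x-y|^{-d-\alpha}$ back into the left-hand side, leaving a remainder of the form $c\int u^2\psi$ by the same splitting. Collecting all constants, which depend only on $\Lambda,\alpha,d$, yields \eqref{DE1}.
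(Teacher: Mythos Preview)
Your proposal is correct and follows essentially the same strategy as the paper: test with $v=u\varphi^2$, use an algebraic decomposition to isolate the good term, apply Young's inequality to absorb part of the cross term back into the left side, and bound the remaining error $\int\int(\varphi(y)-\varphi(x))^2 u^2(y)\,A(x,y)\,|x-y|^{-d-\alpha}\,dx\,dy$ by $c\int u^2\psi$ via exactly the near/far splitting you describe. Two minor remarks: your displayed identity is slightly off (drop the final $-\tfrac12(u(y)-u(x))^2(\varphi(y)-\varphi(x))^2$ term and it becomes correct, as you anticipated with your hedge), and the paper uses the equivalent but asymmetric decomposition $(u(y)-u(x))(v(y)-v(x))=(u(y)-u(x))^2\varphi^2(x)+(u(y)-u(x))\,u(y)\,(\varphi^2(y)-\varphi^2(x))$, which leads to the same estimate after Young's inequality; also, you do not need $A\ge\Lambda^{-1}$ for the lower bound, since the target inequality already carries $A$ and one simply uses $\varphi^2\ge\chi_{B_{R/2}}$.
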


\begin{proof} We define a cutoff function $\varphi(x): \R^d\rightarrow [0,1]$ such that $\varphi= 1$ on $B_{R/2}$, $\varphi= 0$ on $ B_R^c$, and
\[|\varphi(x)-\varphi(y)|\le c\frac{|x-y|}{R}.\]
For example, we can take
\[\varphi(x)= 1-\Big(\frac{\dist(x,B(x_0,R/2))}{R/2}\wedge 1\Big).\]
In what follows the constants may depend on $R$.

Let $v(x)= \varphi^2(x)u(x)$.  Since $|v|\le |u|$ and $u\in L^2$, then $v\in L^2$. Since
\[v(y)-v(x)= (u(y)-u(x))\varphi^2(y)+ u(x)(\varphi^2(y)-\varphi^2(x)),\]
then
\begin{align*}
\int_{\R^d}\int_{\R^d}\frac{(v(y)-v(x))^2}{|x-y|^{d+\alpha}}\, dy\, dx&\le 2\int_{\R^d}\int_{\R^d}\frac{(u(y)-u(x))^2\varphi^4(y)}{|x-y|^{d+\alpha}}\, dy\, dx\\
&\qq +2\int_{\R^d}\int_{\R^d}\frac{u^2(x)(\varphi^2(y)-\varphi^2(x))^2}{|x-y|^{d+\alpha}}\, dy\, dx.\\
\end{align*}
The first term on the right hand side is finite because $\varphi\le 1$ and $u\in \sF$. The second term is bounded by
\[c\int_{\R^d}\int_{\R^d}\frac{u^2(x)(1\wedge |y-x|^2/R^2)}{|x-y|^{d+\alpha}}\, dy\, dx\le c\int_{\R^d}u^2(x)\, dx,\]
which is finite since $u\in L^2$.
Therefore $v\in \sF$.

We write
\begin{align*}
-(h,v)&=\sE(u,v)\\
&= \int_{\R^d}\int_{\R^d}(u(y)-u(x))(\varphi^2(y)u(y)-\varphi^2(x)u(x))
\frac{A(x,y)}{|x-y|^{d+\alpha}}\, dy\, dx\\
&= \int_{\R^d}\int_{\R^d}(u(y)-u(x))^2\varphi^2(x)\frac{A(x,y)}{|x-y|^{d+\alpha}}\, dy\, dx\\
&\qq+ \int_{\R^d}\int_{\R^d}[(u(y)-u(x))(\varphi(y)-\varphi(x))(\varphi(y)+\varphi(x))u(y)]\\
&\qq \qq \times \frac{A(x,y)}{|x-y|^{d+\alpha}}\, dy\, dx\\
&= I_1 - I_2.\\
\end{align*}
Then 
\begin{align}
I_1
&= I_2 -\int_{\R^d}h(y)\varphi^2(y)u(y)\, dy\nn\\
&\le I_2+ \int_{B_R}|h(y)u(y)|\, dy.
\label{DE0}
\end{align}

Using the inequality $ab\le \frac18a^2+ 2b^2$, symmetry, and the fact that $0\le \varphi(x)\le 1$, we have
\begin{align*}
I_2
&\le \tfrac18\int_{\R^d}\int_{\R^d}(u(y)-u(x))^2(\varphi(y)+\varphi(x))^2
\frac{A(x,y)}{|x-y|^{d+\alpha}}\, dy\, dx\\
&\qq + 2\int_{\R^d}\int_{\R^d}(\varphi(y)-\varphi(x))^2u^2(y)\frac{A(x,y)}{|x-y|^{d+\alpha}}\, dy\, dx\\
&\le \tfrac12\int_{\R^d}\int_{\R^d}(u(y)-u(x))^2\varphi^2(x)\frac{A(x,y)}{|x-y|^{d+\alpha}}\, dy\, dx\\
&\qq + 2\int_{\R^d}\int_{\R^d}(\varphi(y)-\varphi(x))^2u^2(y)\frac{A(x,y)}{|x-y|^{d+\alpha}}\, dy\, dx\\
&= \tfrac12 I_1+ 2\int_{\R^d}\int_{\R^d}(\varphi(y)-\varphi(x))^2u^2(y)\frac{A(x,y)}{|x-y|^{d+\alpha}}\, dy\, dx.\\
\end{align*}
Therefore                
\begin{align}
\tfrac12 I_1\le 2\int_{\R^d}\int_{\R^d}(\varphi(y)-\varphi(x))^2&u^2(y)\frac{A(x,y)}{|x-y|^{d+\alpha}}\, dy\, dx\nn\\
&+ \int_{B_R}|h(y)u(y)|\, dy.
\label{DE3}
\end{align}

Next, using $|\vp(y)-\vp(x)|\le c(1\land |x-y|/R)$, some calculus shows that
\bee\label{E2.5A}
\int_{\R^d}(\varphi(y)-\varphi(x))^2\frac{A(x,y)}{|x-y|^{d+\alpha}}\, dx\le cR^{-\alpha}, \qq y\in \R^d.
\eee
If $y\notin B_{2R}$, then
$$\int_{\R^d} (\vp(y)-\vp(x))^2\frac{A(x,y)}{|x-y|^{d+\al}}\, dx
\le c\int_{B_R} \frac{dx}{|y-x_0|^{d+\al}}=c\frac{R^d}{|y-x_0|^{d+\al}}.$$
Hence the first term on the right hand side of \eqref{DE3} is bounded by
\bee\label{E2.5B}
c\int u(y)^2 \psi(y)\, dy.
\eee

Combining \eqref{DE3} and \eqref{E2.5B} with the fact that
$$I_1 \ge \int_{B_{R/2}}\int_{\R^d}(u(y)-u(x))^2\frac{A(x,y)}{|x-y|^{d+\alpha}}\, dy\, dx$$
completes the proof.
\end{proof}

For another approach to the Caccioppoli inequality for non-local operators,
see \cite{Kassmann}.

\section{Meyers inequality}

Let $h\in L^2$. We consider the weak solution $u(x)$ of \eqref{BE02}:
 \[\sE(u, v)= -(h, v)\]
for all $ v\in W^{\frac{\alpha}{2},2}(\R^d).$
We will show that $\Gamma u$ is in $L^p$ for some $p> 2$. 
We suppose throughout this section that $d>\al$. This will always be
the case if $d\ge 2$.

Let $$ u_R= \frac{1}{|B_R|}\int_{B_R}u(y)\, dy.$$ 
Using Theorem \ref{THM2.1} with 
$u$ replaced by $u- u_R$, we have
\begin{align}
\|\Gamma u\|_{L^2(B_{R/2})}^2&\le c\int_{\R^d}(u(x)-u_R)^2\psi(x)\, dx\label{CE4}\\
&\qq+ \int_{B_R}|h(x)(u(x)-u_R)|\, dx.\nn
\end{align}

\bel\label{PRO1} Suppose $u\in W^{\frac{\alpha}{2},q}(B_R),$ $1<q\le 2$.
Suppose $x_0\in \R^d$ and $R>0$.
Let $p=2dq/(2d-q\al)$.  Then $u\in L^p(B_R)$  and there exists a constant $c_1$
depending only on $d, \al,$ and $q$  such that
\bee\label{E101}
\|u-u_R\|_{L^p(B_R)}\le c_1\Big[\int_{B_R}\int_{B_R}\frac{(u(y)-u(x))^q}{|x-y|^{d+\frac{\alpha}{2}q}}\, dy\
 dx\Big]^{\frac1q}.
\eee
\eel

\begin{proof} We first do the case $R= 1$. 
By the Sobolev-Besov embedding theorem (see Theorem 7.57 in \cite{Adams} or  Section 2.3.3 in \cite{Edmunds-Triebel}), we know 
\begin{align}
\|u-u_R\|_{L^p(B_1)}&\leq c\|u-u_R\|_{W^{\frac{\alpha}{2},q}(B_1)}
\label{BE12}\\
&=c\Big\{\|u-u_R\|_{L^q(B_1)}+
\Big[\int_{B_1}\int_{B_1}\frac{(u(y)-u(x))^q}{|x-y|^{d+\frac{\alpha}{2}q}}\, dy\, dx\Big]^{\frac{1}{q}}.\Big\}\nn
\end{align}
On the other hand, the fractional Poincar\'e inequality for $u\in W^{\frac{\alpha}{2},q}(B_1)$ (see equation (4.2) in \cite{Mingione})
tells us
\bee\label{BE13}
\|u-u_R\|_{L^q(B_1)}\leq c \Big[\int_{B_1}\int_{B_1}\frac{(u(y)-u(x))^q}{|x-y|^{d+\frac{\alpha}{2}q}}\, dy\, dx\Big]^{\frac{1}{q}}.
\eee 
Combining \eqref{BE12} and \eqref{BE13} proves the lemma in the case $R=1$.

The case for general $R$ follows by a  scaling argument, that is, by a change
of variables. 
The $dy\, dx$ expression in the right hand side of \eqref{E101} contributes a factor $R^{2d}$ and 
the denominator  contributes
a factor 
$R^{-(d+\al q/2)}$, so the right hand side of \eqref{E101} is equal to
$$c(R^{d-\al q/2})^{1/q} 
\Big[\int_{B_1}\int_{B_1}\frac{(v(y)-v(x))^q}{|x-y|^{d+\frac{\alpha}{2}q}}\, dy\
 dx\Big]^{\frac1q},$$
where $v(z)=u(Rz)$.
Similarly the left hand side of \eqref{E101}
is equal to  $$R^{d/p}
\|v-v_1\|_{L^p(B_1)}.$$
Inequality \eqref{E101} then follows by the preceding paragraph and our choice of $p$.
\end{proof}

\begin{proposition}\label{PRO2.2} There exists $q_1 \in (1,2)$ and a
constant $c_1$ depending on $d, \al,$ and $q_1$  such that
if $x_0\in \R^d$ and $R>0$, then
\begin{align}
\|u-u_R\|_{L^2(B_R)}\le c R^{(\alpha-\alpha_1)/2}\|\Gamma u\|_{L^{q_1}(B_R)},
\label{BE15}
\end{align}
where $\al_1=(2-q_1)d/q_1$.
\end{proposition}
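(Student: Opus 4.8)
The plan is to reduce to the case $R=1$ by scaling, invoke Lemma \ref{PRO1}, and then establish the resulting estimate by a dyadic decomposition together with the Hardy--Littlewood maximal function. For the reduction: by the change of variables $v(z)=u(Rz)$ one checks that $\tfrac1{|B_1|}\int_{B_1}v=u_R$, that $\|u-u_R\|_{L^2(B_R)}=R^{d/2}\|v-v_1\|_{L^2(B_1)}$, and that $\|\Gamma u\|_{L^{q_1}(B_R)}=R^{d/q_1-\alpha/2}\|\Gamma v\|_{L^{q_1}(B_1)}$ (here $v_1=\tfrac1{|B_1|}\int_{B_1}v$); so the power $R^{(\alpha-\alpha_1)/2}$ is exactly the one forced by scaling and it suffices to prove $\|v-v_1\|_{L^2(B_1)}\le c\|\Gamma v\|_{L^{q_1}(B_1)}$. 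I would fix $q_1\in(2d/(d+\alpha),2)$, so that $p:=2dq_1/(2d-q_1\alpha)>2$. Since $B_1$ has finite measure, $\|v-v_1\|_{L^2(B_1)}\le c\|v-v_1\|_{L^p(B_1)}$, and Lemma \ref{PRO1} with $q=q_1$ bounds the latter by $c\big(\int_{B_1}\int_{B_1}|v(y)-v(x)|^{q_1}|x-y|^{-d-\alpha q_1/2}\,dy\,dx\big)^{1/q_1}$. Everything therefore reduces to the estimate
\[
\int_{B_1}\int_{B_1}\frac{|v(y)-v(x)|^{q_1}}{|x-y|^{d+\alpha q_1/2}}\,dy\,dx\ \le\ c\int_{B_1}\Gamma v(x)^{q_1}\,dx.
\]

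The naive route fails here: writing the integrand as $\big(|v(y)-v(x)|^2|x-y|^{-d-\alpha}\big)^{q_1/2}\,|x-y|^{-d(2-q_1)/2}$ and applying H\"older with exponents $2/q_1$ and $2/(2-q_1)$ leaves the non-integrable weight $|x-y|^{-d}$. I would instead localize in scale: for each $x\in B_1$ decompose $B_1\setminus\{x\}=\bigcup_{k\ge0}A_k(x)$ with $A_k(x)=\{y\in B_1:2^{-k}<|x-y|\le 2^{-k+1}\}$. On $A_k(x)$ one has $|x-y|\approx 2^{-k}$ and $|A_k(x)|\le c\,2^{-kd}$, and the same H\"older step now has all powers of $2^{-k}$ cancelling, giving
\[
\int_{A_k(x)}\frac{|v(y)-v(x)|^{q_1}}{|x-y|^{d+\alpha q_1/2}}\,dy\ \le\ c\,b_k(x)^{q_1/2},\qquad b_k(x):=\int_{A_k(x)}\frac{|v(y)-v(x)|^2}{|x-y|^{d+\alpha}}\,dy,
\]
with $c$ independent of $k$ and $x$. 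One has $\sum_{k\ge0}b_k(x)\le\Gamma v(x)^2$, but since $q_1/2<1$ this does not by itself control $\sum_k b_k(x)^{q_1/2}$, so the small scales need a separate, scale-uniform bound.

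That is where the Hardy--Littlewood maximal function $M$ enters: I claim $b_k(x)\le c\,M(\Gamma v^2)(x)$ for every $k$. Indeed $b_k(x)\le c\,2^{k(d+\alpha)}\int_{B(x,2^{-k})}|v(y)-v(x)|^2\,dy$, and writing $|v(y)-v(x)|^2\le 2|v(y)-v_{B(x,2^{-k})}|^2+2|v_{B(x,2^{-k})}-v(x)|^2$, the first term contributes $\le c(2^{-k})^{d+\alpha}M(\Gamma v^2)(x)$ by the fractional Poincar\'e inequality on $B(x,2^{-k})$ together with $\tfrac1{|B(x,2^{-k})|}\int_{B(x,2^{-k})}\Gamma v^2\le M(\Gamma v^2)(x)$, while $|v_{B(x,2^{-k})}-v(x)|\le c(2^{-k})^{\alpha/2}M(\Gamma v^2)(x)^{1/2}$ follows by telescoping the averages $v_{B(x,2^{-j})}$ over dyadic scales $j\ge k$ and using the fractional Poincar\'e inequality and $M$ at each scale. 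Given this uniform bound, a good-$\lambda$ splitting of $\sum_k b_k(x)^{q_1/2}$ at the level $\tau=M(\Gamma v^2)(x)$ — bounding the terms with $b_k(x)\le\tau$ by $\tau^{q_1/2-1}b_k(x)$ and estimating $\#\{k:b_k(x)>\tau\}\le\Gamma v(x)^2/\tau$ for the rest — gives $\sum_k b_k(x)^{q_1/2}\le c\,M(\Gamma v^2)(x)^{q_1/2-1}\Gamma v(x)^2$ for a.e.\ $x$. Integrating over $B_1$ and using $M(\Gamma v^2)\ge\Gamma v^2$ a.e., so that $M(\Gamma v^2)^{q_1/2-1}\le\Gamma v^{q_1-2}$ (the exponent being negative), yields the displayed estimate and hence the proposition.

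The step I expect to be the main obstacle is precisely the passage from the Gagliardo seminorm to $\|\Gamma v\|_{L^{q_1}}$: it is false pointwise in $x$ (the ratio of the two sides can be made arbitrarily large on a set of small measure), so no direct H\"older manipulation suffices and one genuinely needs the scale-localized maximal-function argument above; by contrast the scaling reduction, the inclusion $L^2(B_1)\subset L^p(B_1)$, and the use of Lemma \ref{PRO1} are routine. (Taking instead $q_1=2d/(d+\alpha)$ makes $p=2$ and $\alpha_1=\alpha$, so the $L^p$-to-$L^2$ step and the power of $R$ disappear, but the key estimate is unchanged.)
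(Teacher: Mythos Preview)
Your good-$\lambda$ step contains a genuine error. You claim that for the terms with $b_k(x)\le\tau$ one has $b_k^{q_1/2}\le\tau^{q_1/2-1}b_k$, but since $q_1/2-1<0$ the hypothesis $b_k\le\tau$ gives $b_k^{q_1/2-1}\ge\tau^{q_1/2-1}$, so the inequality goes the wrong way. In fact no splitting at a single level can succeed: given only $\sum_k b_k\le S$ and a uniform ceiling $b_k\le T$, the sum $\sum_k b_k^{q_1/2}$ is unbounded (take $b_k=S/N$ for $N$ indices to get $\sum_k b_k^{q_1/2}=N^{1-q_1/2}S^{q_1/2}\to\infty$). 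Your maximal-function estimate $b_k(x)\le cM(\Gamma v^2)(x)$ is exactly such a ceiling, with no decay in $k$, so it does not rule this out; the asserted pointwise bound $\sum_k b_k(x)^{q_1/2}\le c\,M(\Gamma v^2)(x)^{q_1/2-1}\Gamma v(x)^2$ is therefore unproved, and with it the whole argument.

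The paper's device is much simpler and is hinted at in your final parenthetical, though you do not use it: apply Lemma~\ref{PRO1} not with the given $\alpha$ but with a \emph{strictly smaller} order $\alpha_1<\alpha$, choosing $q_1=2d/(d+\alpha_1)$ so that the Sobolev exponent is exactly $p=2$. This yields
\[
\|u-u_R\|_{L^2(B_R)}\le c\Big(\int_{B_R}\int_{B_R}\frac{|u(y)-u(x)|^{q_1}}{|x-y|^{d+\alpha_1 q_1/2}}\,dy\,dx\Big)^{1/q_1},
\]
and now the ``naive'' H\"older step you rejected actually works. With exponents $2/q_1$ and $2/(2-q_1)$ against the measure $|x-y|^{-d}\,dy$, the first factor is bounded by $|\Gamma u(x)|^{q_1}$ and the second is $\big(\int_{B_R}|x-y|^{-d+(\alpha-\alpha_1)q_1/(2-q_1)}\,dy\big)^{(2-q_1)/2}$, which is finite precisely because $\alpha_1<\alpha$. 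Integrating over $x\in B_R$ finishes the proof. The gap $\alpha-\alpha_1>0$ is exactly what makes the residual weight integrable and is also what produces the factor $R^{(\alpha-\alpha_1)/2}$ after scaling; your decision to keep the full order $\alpha$ in Lemma~\ref{PRO1} is what forced you into the failed dyadic/maximal argument.
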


\begin{proof}
Again we may suppose $R=1$ and obtain the general case by a scaling
argument as in the last paragraph of the proof of Lemma \ref{PRO1}.
Take  $\alpha_1<\alpha$ and let $q_1=2d/(d+\al_1)$. Note that $q_1\in (1,2)$.
By Lemma \ref{PRO1}
\begin{align}
\|u-u_R\|_{L^2(B_R)}\le c\Big[\int_{B_R}\int_{B_R}\frac{(u(y)-u(x))^ {q_1}}{|x-y|^{d+{{\alpha_1 q_1/2}}}}\, dy\, dx\Big]^{\frac{1}{q_1}}.
\label{BE16}
\end{align}
Fix $x$ for the moment.
Using H\"older's inequality with respect to the measure $|x-y|^{-d} \, dy$,
\begin{align*}
\int_{B_R}&\frac{(u(y)-u(x))^ {q_1}}{|x-y|^{d+{{\alpha_1q_1}/{2}}}}\, dy\\
&=\int_{B_R}\frac{(u(y)-u(x))^{q_1}}{|x-y|^{\alpha q_1/2}}
\frac{1}{|x-y|^{{(\alpha_1-\alpha)}{q_1}/{2}}}\frac{1}{|x-y|^d}\,dy\\
&\le \Big[\int_{B_R}\Big(\frac{(u(y)-u(x))^{q_1}}{|x-y|^{\alpha{q_1}/{2}}}\Big)
^{\frac{2}{q_1}}\frac{1}{|x-y|^d}\, dy\Big]^{\frac{q_1}{2}}\\
&\qq\qq\times\Big[\int_{B_R}\Big(\frac{1}{|x-y|^{(\alpha_1-\alpha){q_1}/{2}}}\Big)
^{\frac{2}{2-q_1}}\frac{1}{|x-y|^d}\, dy\Big]^{\frac{2-q_1}{2}}\\
&= \Big[\int_{B_R}\frac{(u(y)-u(x))^2}{|x-y|^{d+\alpha}}\, dy\Big]^{\frac{q_1}{2}}
\Big[\int_{B_R}\frac{1}{|x-y|^{(\alpha_1-\alpha)\frac{q_1}{2-q_1}+d}}\, dy\Big]^{\frac{2-q_1}{2}}\\
&\le c \Big[\int_{B_R}\frac{(u(y)-u(x))^2}{|x-y|^{d+\alpha}}\, dy\Big]^{\frac{q_1}{2}}\\
&\le c  |\Gamma u(x)|^{q_1}.
\end{align*}
Integrating over $x\in B_R$, taking the $q_1^{th}$ root, and combining
with  \eqref{BE16}
yields \eqref{BE15}.
\end{proof}

\begin{proposition}\label{THE2.2} There exists $p\in (2, 4d/(2d-\al))$ and
a constant $c_1$ depending on $\Lambda$, $d$, $\al$, and $p$
 such that if $u$ satisfies \eqref{BE02}, then 
$$\|\Gamma u\|_{L^p(\R^d)}\le c_1\Big(\sE(u, u)^{\frac12}+ \|h\|_{L^2(\R^d)}
+\norm{u}_{L^p(\R^d)} +\norm{u}_{L^{2p/(4-p)}(\R^d)} \Big).$$ 
\end{proposition}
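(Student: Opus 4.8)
The plan is to run a self-improving (reverse Hölder / Gehring) argument built on the Caccioppoli inequality of Theorem~\ref{THM2.1}, feeding it through the two preceding propositions. First I would fix $x_0$ and $R>0$, apply \eqref{CE4} to bound $\|\Gamma u\|_{L^2(B_{R/2})}^2$ by $c\int_{\R^d}(u(x)-u_R)^2\psi(x)\,dx$ plus the term $\int_{B_R}|h(x)(u(x)-u_R)|\,dx$, then split the first integral dyadically according to the definition of $\psi$: on $B_{2R}$ one has $\psi\le cR^{-\alpha}$, while on each annulus $B_{2^{k+1}R}\setminus B_{2^kR}$ one has $\psi(x)\le c R^d/(2^k R)^{d+\alpha}$. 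On each such region I would replace $u_R$ by $u_{2^kR}$ at the cost of telescoping differences $|u_{2^jR}-u_{2^{j+1}R}|$, each controlled by $\|u-u_{2^{j+1}R}\|_{L^1(B_{2^{j+1}R})}/|B_{2^jR}|$ and hence, via Proposition~\ref{PRO2.2}, by a constant times $(2^jR)^{(\alpha-\alpha_1)/2}|B_{2^jR}|^{-1/q_1}\|\Gamma u\|_{L^{q_1}(B_{2^{j+1}R})}$. The key gain is that the scaling exponents work out so that the sum over annuli converges: the weight decay $2^{-k(d+\alpha)}$ beats the growth from the Poincaré-type estimate on larger balls, and one obtains, after a maximal-function bound on the local pieces, an inequality of reverse-Hölder type roughly of the shape
\[
\Big(\ \tlint_{B_{R/2}} |\Gamma u|^2\Big)^{1/2}
\le c\,\Big(M\big(|\Gamma u|^{q_1}\big)(x_0)\Big)^{1/q_1}
+ c\,\big(\text{terms in }h\text{ and }u\big),
\]
with $q_1<2$, valid for every $x_0$ and $R$.

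The second step is to upgrade this to a global $L^p$ estimate. Here I would invoke the Hardy--Littlewood maximal inequality together with a Gehring-type lemma: the pointwise reverse-Hölder inequality above, being self-improving, yields that $\Gamma u\in L^p_{\rm loc}$ for some $p>2$ close to $2$, with the quantitative bound localized. Because the Caccioppoli estimate already has the correct global structure (the right-hand side of \eqref{CE4} involves integrals over all of $\R^d$, and $\sE(u,u)<\infty$), I can sum/integrate the local estimates over a tiling of $\R^d$ and control the cross-terms by $\sE(u,u)^{1/2}$, $\|h\|_{L^2}$, and $\|u\|_{L^2}$. Tracking the exponents arising from the scaling in Propositions~\ref{PRO1} and~\ref{PRO2.2}: the embedding forces $p<4d/(2d-\alpha)$ (this is the value $2\cdot 2d/(2d-\alpha)$ coming from taking $q$ near $2$ in $p=2dq/(2d-q\alpha)$), and the $h$-term $\int_{B_R}|h(u-u_R)|$ handled by Hölder and Proposition~\ref{PRO2.2} produces exactly the norms $\|h\|_{L^2}$ and the auxiliary $\|u\|_{L^{2p/(4-p)}}$ (the conjugate-type exponent that appears when one balances an $L^p$ norm of $\Gamma u$ against an $L^2$ norm of $h$), while the $\psi$-weighted quadratic term produces $\|u\|_{L^p}$ after the maximal-function estimate is integrated. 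Assembling these gives precisely
\[
\|\Gamma u\|_{L^p(\R^d)}\le c_1\Big(\sE(u,u)^{1/2}+\|h\|_{L^2(\R^d)}+\|u\|_{L^p(\R^d)}+\|u\|_{L^{2p/(4-p)}(\R^d)}\Big).
\]

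The main obstacle, as the authors flag in the introduction, is that the Caccioppoli inequality is \emph{nonlocal}: $\int_{B_{R/2}}|\Gamma u|^2$ genuinely depends on $u$ far away through $\psi$. So one cannot run the classical Giaquinta--Modica machinery verbatim. The delicate point is the dyadic summation: one must verify that the geometric decay of $\psi$ on distant annuli, $R^d/|x-x_0|^{d+\alpha}$, strictly dominates the polynomial growth $(2^jR)^{(\alpha-\alpha_1)/2}\,|B_{2^jR}|^{1/2-1/q_1}$ coming from applying Proposition~\ref{PRO2.2} on the large balls $B_{2^{j+1}R}$ — i.e. that $\tfrac{\alpha-\alpha_1}{2}+d(\tfrac12-\tfrac1{q_1})<\tfrac{\alpha}{2}$, which holds precisely because $\alpha_1>0$ and $q_1<2$ are chosen compatibly (recall $\alpha_1=(2-q_1)d/q_1$, so the exponent is $-\alpha_1/2<0$ against a weight decay that is more than enough). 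Making this rigorous, and simultaneously controlling the interaction between the "far" contributions and the maximal function of the "near" contributions so that the reverse-Hölder constant is independent of $R$ and $x_0$, is the heart of the argument; once that localization is in place, the passage to $L^p$ via Gehring plus the maximal theorem is standard.
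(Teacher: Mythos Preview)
Your overall architecture --- Caccioppoli $\Rightarrow$ reverse H\"older $\Rightarrow$ Gehring $\Rightarrow$ global covering --- matches the paper. The handling of the local piece via Proposition~\ref{PRO2.2} and the treatment of the $h$-term (H\"older with exponents $4/p$ and $4/(4-p)$, producing $\|h\|_{L^2}$ and $\|u\|_{L^{2p/(4-p)}}$) are also essentially what the paper does. The genuine divergence from the paper, and the place where your argument does not close, is the treatment of the nonlocal tail $\int_{B_R^c}(u-u_R)^2\psi$.

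Your dyadic-telescoping scheme runs into a \emph{critical} scaling obstruction, not a subcritical one. On the $k$-th annulus you get, after applying Proposition~\ref{PRO2.2} on $B_{2^kR}$,
\[
\frac{R^d}{(2^kR)^{d+\alpha}}\,\|u-u_{2^kR}\|_{L^2(B_{2^kR})}^2
\le c\,\frac{R^d}{(2^kR)^{d+\alpha}}(2^kR)^{\alpha-\alpha_1}\|\Gamma u\|_{L^{q_1}(B_{2^kR})}^2
= c\,\frac{R^d}{(2^kR)^{d+\alpha_1}}\|\Gamma u\|_{L^{q_1}(B_{2^kR})}^2.
\]
If you now control $\|\Gamma u\|_{L^{q_1}(B_{2^kR})}^2$ by $(2^kR)^{2d/q_1}\big(M(|\Gamma u|^{q_1})(x_0)\big)^{2/q_1}$, the exponent of $2^kR$ is $2d/q_1-(d+\alpha_1)=0$, since $\alpha_1=(2-q_1)d/q_1$ forces $2d/q_1=d+\alpha_1$. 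Every annulus contributes the \emph{same} amount $cR^d\big(M(|\Gamma u|^{q_1})(x_0)\big)^{2/q_1}$ and the series diverges; the ``strict domination'' you claim is in fact exact cancellation, regardless of which $q_1\in(1,2)$ you pick. The paper sidesteps this entirely: it does \emph{not} try to absorb the tail into $\Gamma u$. Instead it uses the crude splitting $(u-u_R)^2\le 2u^2+2u_R^2$ on $B_R^c$, bounds $\int_{B_R^c}u^2\psi$ by a convolution of $u^2$ with a radially decreasing $L^1$ kernel, and hence by $cM(u^2)(y)$ for any $y\in B_R$ (Stein's inequality). This, together with $u_R^2\le cM(u^2)(y)$, yields a genuinely local reverse H\"older inequality with $f=(M(u^2)+|h|\,Mu)^{q_1/2}$ as the inhomogeneity, to which the standard Gehring lemma applies; the $M(u^2)$ piece is then what produces the $\|u\|_{L^p}$ term after integrating in $L^{p/2}$. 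So the appearance of $\|u\|_{L^p}$ on the right-hand side is not incidental --- it is the price for handling the nonlocal tail without the (unavailable) decay you were hoping for.
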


\begin{proof} 
%RRRRRRRRRRR

\ni{\sl Proof.} Set $x_0=0$ for now.
From \eqref{CE4} we know that
\begin{align}
\|\Gamma u\|_{L^2(B_{R/2})}^2&\le c\int_{\R^d}(u(x)-u_R)^2\psi_R(x)\, dx+ \int_{B_R}|h(x)(u(x)-u_R)|\, dx\nn\\
&\le {c}R^{-\al}\int_{B_R}(u(x)-u_R)^2\, dx+ c\int_{B_R^c}u(x)^2\psi_R(x)\, dx\nn\\
&\qq + c\int_{B_R^c} u_R^2\psi_R(x)\, dx+ \int_{B_R}|h(x)(u(x)-u_R)|\, dx\nn\\
&=J_1+J_2+J_3+J_4;
\label{A.1}
\end{align}
recall 
$$\psi_R(x)=R^{-\al}\land \frac{R^d}{|x-x_0|^{d+\al}}.$$
We proceed to bound $J_1, J_2, J_3$, and $J_4$.

Using Proposition \ref{PRO2.2}, we have
\bee
\label{A.2}
J_1\le cR^{-\al_1} \Big(\int_{B_R}\Gamma u(x)^{q_1}\, dx\Big)^{\frac{2}{q_1}}
\eee
for $q_1\in (1,2)$.

Let $M$ be the Hardy-Littlewood maximal operator:
$$Mf(x)=\sup_{r>0} \frac{1}{|B_r|}\int _{B(x,r)} |f(z)|\, dz.$$
If $y\in B_{R}$ and $k\ge 0$, then 
$$\frac{1}{|B_{2^kR}|}\int_{B_{2^kR}}u(x)^2\, dx
\le \frac{1}{|B_{2^kR}|}\int_{B(y,(2^k+1)R)} u(x)^2\, dx
\le cM(u^2)(y).$$ 
Similarly, if $x\in B_R$,
$$|u_R|\le \frac1{|B_R|}\int_{B_R} |u(z)|\, dz\le cMu(x).$$

%In particular, 
%if $y\in B_{R}$, then by Jensen's inequality
%$$|u_R|^2\le \frac1{|B_R|}
%\int_{B_R}|u(x)|^2\, dx
%\le cM(u^2)(y).$$

For $J_2$, we then have
\begin{align*}
c\int_{B_R^c}u(x)^2\psi_R(x)\, dx&= c\sum_{n=0}^\infty \int_{\{B_{2^{n+1}R}
-B_{2^nR}\}} u(x)^2 \frac{R^d}{|x|^{d+\al}} dx\\
&\le c\sum_{n=0}^\infty \frac{(2^{n+1}R)^d}{|B_{2^{n+1}R}|}
\int_{B_{2^{n+1}R}} u(x)^2\, dx\, \frac{R^d}{(2^nR)^{d+\al}}\\
&\le c\sum_{n=0}^\infty M(u^2)(y) 2^{nd}\frac{R^{2d}}{2^{nd}2^{n\al}
R^{d+\al}}\\
&=cM(u^2)(y) R^{d-\al}\sum_{n=0}^\infty \frac{1}{2^{n\al}}\\
&=c M(u^2)(y)R^{d-\al},
\end{align*}
as long as $y\in B_R$.

For $J_3$ we have
\begin{align*}
c\int_{B_R^c} u_R^2\psi_R(x)\, dx&=cu_R^2\int_{B_R^c}  \frac{R^d}{|x|^{d+\al}}
\, dx\\
&=cR^{d-\al} u_R^2\le cR^{d-\al}M(u^2)(y)
\end{align*}
if $y\in B_R$.

Since
$|B(x,s)|^{-1}\int_{B(x,s)} u(y)\, dy$ converges to $u(x)$ as $s\to 0$
for almost
every $x$ and is bounded by $Mu(x)$, we have $|u(x)|\le Mu(x)$ a.e. 
Thus, with  $x\in B_R$,
\begin{align*}
J_4=c\int_{B_R} |h(x)(u(x)-u_R)|\, dx
&\le c\int _{B_R} |h(x) u(x)|\, dx\\
&\qq +c\int_{B_R} |h(x) Mu(x)|\, dx\\
&\le c\int _{B_R} |h(x)| Mu(x)\, dx.
\end{align*}

Combining our bounds for $J_1, J_2, J_3$, and $J_4$,
if $y\in B_R$,
\begin{align}
\norm{\Gamma u}^2_{L^2(B_{R/2})}
&\le cR^{-\al_1} \norm{\Gamma u}^2_{L^{q_1}(B_R)}
+c R^{d-\al}M(u^2)(y)\label{A.3}\\
&\qq + c\int_{B_R} |h(x)| Mu(x)\, dx.\nn
\end{align}

Integrating both sides of \eqref{A.3}  over $y\in B_R$ and
dividing by $|B_R|$, we conclude that
\begin{align}
\int_{B_{R/2}}\Gamma u(x)^2\, dx &\le {c}R^{-\al_1}\Big(\int_{B_R}\Gamma u(x)^{q_1}\, dx\Big)^{\frac{2}{q_1}}\label{A.4}\\
&\qq + {c}R^{d-\al}\int_{B_R}M(u^2)(x)\, dx+ c\int_{B_R}|h(x)|Mu(x)\, dx.\nn
\end{align}

Let $$g(x)= \Gamma u(x)^{q_1}$$ and $$f(x)= \Big(M(u^2)(x)+ |h(x)|Mu(x)\Big)^{\frac{q_1}{2}}.$$
Set $R_0= 4\sqrt d$ and suppose from now on that $R< R_0$. 
Recall that we assume $d\ge \al$ (see the second paragraph of Section \ref{prelim}). 
Noting that $R^{d-\al}$ is then bounded
by $(4\sqrt d)^{d-\al}$ and recalling that $\al_1=(2-q_1)d/q_1$,
we can rewrite \eqref{A.4} as 
\begin{align}
&\frac{1}{|B(x_0,R)|}\int_{B(x_0,R/2)}g^{\frac{2}{q_1}}(x)\, dx\label{A.5}\\
&\qq\le c\Big(\frac{1}{|B(x_0,R)|}\int_{B(x_0,R)}g(x)\, dx\Big)^{\frac{2}{q_1}}+ c\frac{1}{|B(x_0,R)|}\int_{B(x_0,R)}f^{\frac{2}{q_1}}(x)\, dx\nn
\end{align}
if $R<R_0$.
By a translation argument, \eqref{A.5} holds for 
all $x_0\in \R^d$.

We now apply the reverse H\"older inequality (see Theorem 4.1 in [12]). 
Thus  there exists $\varepsilon> 0$ and $c_1> 0$ such that 
if $R<R_0$, then $g(x)\in
L^t(B(x_0,R/2))$ for all $t\in [\frac{2}{q_1},\frac{2}{q_1}+\varepsilon)$
and   
\begin{align*}
\Big(\frac{1}{|B(x_0,R/2)|}\int_{B(x_0,R/2)}g^{t}(x)\, dx\Big)^{\frac{1}{t}}&\le c
\Big(\frac{1}{|B(x_0,R)|}\int_{B(x_0,R)}g^{\frac{2}{q_1}}(x)\, dx\Big)^{\frac{q_1}{2}}\\
&\qq + c\Big(\frac{1}{|B(x_0,R)|}\int_{B(x_0,R)}f^{t}(x)\, dx\Big)^{\frac{1}{t}}.
\end{align*}
This leads to
\begin{align*}
\Big(\frac{1}{|B(x_0,R/2)|}&\int_{B(x_0,R/2)}\Gamma u(x)^{q_1{t}}\, dx\Big)^{\frac{1}{t}}\\
&\le c\Big(\frac{1}{|B(x_0,R)|}\int_{B(x_0,R)}\Gamma u(x)^2\, dx\Big)^{\frac{q_1}{2}}\\
&\qq+
c\Big(\frac{1}{|B(x_0,R)|}\int_{B(x_0,R)}(M(u^2))^{tq_1/2}(x)\, dx\Big)^{\frac{1}{t}}\\
&\qq +c\Big(\frac{1}{|B(x_0,R)|} \int (|h|Mu)^{tq_1/2}\, dx\Big)^{1/t}.
\end{align*}
Choose $t\in (2/q_1, 2/q_1+\eps)$ so that $q_1t<4d/(d-\al)$ and set $p=q_1t$.

Now set $R=2\sqrt d$ for the remainder of the proof.
Taking $q_1^{th}$ roots and using 
the inequality $(a+b)^{1/q_1}\le a^{1/q_1}+b^{1/q_1}$,
\begin{align*}
\norm{\Gamma u}_{L^p(B(x_0,R/2))}&\le c\norm{\Gamma u}_{L^2(B(x_0,R))}
+c\norm{M(u^2)}^{1/2}_{L^{p/2}(B(x_0,R))}\\
&\qq + c\norm{h(Mu)}^{1/2}_{L^{p/2}(B(x_0,R))}.
\end{align*}

%SSSSSSSSSSSSSS

For $k\in \Z^d$, let $C_k=B(k, \sqrt d)$ and $D_k=B(k, 2\sqrt d)$.
Note that $\R^d\subset \cup_{k\in \Z^d} C_k$ and that there exists
an integer $N$ depending only on the dimension $d$ such that no point
of $\R^d$ is in more than $N$ of the $D_k$. This can be
expressed as $\sum_{k\in \Z^d} \chi_{D_k}\le N$.

Using $\sum a_k^{p/2}\le (\sum a_k)^{p/2}$
when each  $a_k\ge 0$ and $p/2\ge 1$, we write
\begin{align*}
\int _{\R^d} |\Gamma u(x)|^p\, dx&\le \sum_{k\in \Z^d} \int_{C_k}
|\Gamma u(x)|^p\, dx\\
&\le c\sum_k \Big(\int_{D_k} |\Gamma u(x)|^2\, dx\Big)^{p/2}
+c\sum_k \int_{D_k} (M(u^2)(x))^{p/2}\, dx\\
&\qq + c\sum_k \int_{D_k} (|h(x)|Mu(x))^{p/2}\, dx\\
&\le c\Big(\sum_k \int_{D_k} |\Gamma u(x)|^2\, dx\Big)^{p/2}
+c\sum _k \int_{D_k} (M(u^2)(x))^{p/2}\, dx\\
&\qq + c\sum_k \int_{D_k} (|h(x)|Mu(x))^{p/2}\, dx\\
&=c\Big(\int_{\R^d} |\Gamma u(x)|^2 \sum_k \chi_{D_k}(x)\, dx
\Big)^{p/2}\\
&\qq +c\int_{\R^d} (M(u^2)(x))^{p/2}\sum_k \chi_{D_k}(x)\, dx\\
&\qq +c\int_{\R^d} (|h(x)|Mu(x))^{p/2}\sum_k \chi_{D_k}(x)\, dx.
\end{align*}
We thus obtain
\begin{align}
\int_{\R^d} |\Gamma u|^p&\le c\Big(\int_{\R^d} |\Gamma u|^2\,dx\Big)^{p/2}
+ c\int_{\R^d} (M(u^2))^{p/2}\,dx\nn\\
&\qq +c\int_{\R^d} (|h|\, Mu)^{p/2}\,dx. \label{E85}
\end{align}

Letting $r=4/p$ and $s=4/(4-p)$, H\"older's inequality and the inequality
$ab\le \frac12 a^2+\frac12 b^2$ shows
\begin{align}
\int(|h| Mu)^{p/2}&\le \Big(\int |h|^{pr/2}\Big)^{1/r}
\Big(\int (Mu)^{ps/2}\Big)^{1/s}\nn\\
&\le \tfrac12 \Big(\int |h|^2\Big)^{p/2} 
 +\tfrac12 \Big(\int (Mu)^{2p/(4-p)}
\Big)^{(4-p)/2}.
\label{E86}
\end{align}
Since $M$ is a bounded operator on $L^{p'}$ for each $p'>1$ and 
we know that $2p/(4-p)>1$,
the second term on the last line of \eqref{E86} is bounded
by
$$c\Big(\int |u|^{2p/(4-p)}\Big)^{(4-p)/2}.$$
Similarly, since $p>2$, the second term on the right hand side of 
the first line of \eqref{E85}
is bounded by
$$c\int (|u|^2)^{p/2}=c\int |u|^p.$$
Therefore
\begin{align*}
\int_{\R^d} |\Gamma u|^p&\le c\Big(\int |\Gamma u|^2\Big)^{p/2}
+c\int |u|^p+c\Big(\int |h|^2\Big)^{p/2}\\
&\qq+ c\Big(\int |u|^{2p/(4-p)}\Big)^{(4-p)/2}.
\end{align*}
Taking $p^{th}$ roots and using $(a+b)^{1/p}\le a^{1/p}+b^{1/p}$, we obtain
\begin{align*}
\norm{\Gamma u}_{L^p(\R^d)}&
\le c\norm{\Gamma u}_{L^2(\R^d)}+c \norm{u}_{L^p(\R^d)}
+c\norm{h}_{L^2(\R^d)}\\
&\qq +c\norm{u}_{L^{2p/(4-p)}(\R^d)}.
\end{align*}
This completes the proof of the proposition.
\end{proof}

We now bound the $L^p$ and $L^{2p/(4-p)}$ norms of $u$.

\bet\label{meyers} (1) Suppose $d>\al$ and \eqref{BE02} holds. There
exists $p>2$ and a constant $c_1$ depending on $\Lambda, p, d$, and $\al$
such that
$$\|\Gamma u\|_{L^p(\R^d)}\le c_1\Big(\sE(u, u)^{\frac12}+ \|h\|_{L^2(\R^d)}
+\norm{u}_{L^2(\R^d)}\Big).$$
(2) If in addition $u\in \sD({\sL})$, there exists a constant $c_2$
such that
$$\|\Gamma u\|_{L^p(\R^d)}\le c_1\Big( \|h\|_{L^2(\R^d)}
+\norm{u}_{L^2(\R^d)} \Big).$$
\eet

\begin{proof}
Let $p_1=2d/(d-\al)$. Let $C_k$  be defined as in the previous
proof. 

By Lemma \ref{PRO1} with $q=2$
$$\int_{C_k} |u-u_{C_k}|^{p_1}\le c\Big(\int_{C_k} |\Gamma u(x)|^2\, dx\Big)^{{p_1}/2}.$$
Here $u_{C_k}=(1/|C_k|)\int_{C_k} u$. Then
\begin{align*}
\sum_{k\in \Z^d} \int_{C_k} |u-u_{C_k}|^{p_1}&
\le c\sum_k \Big(\int_{C_k}|\Gamma u(x)|^2\, dx\Big)^{{p_1}/2}\\
&\le c\Big(\sum_k \int_{C_k} |\Gamma u(x)|^2\, dx\Big)^{{p_1}/2}\\
&\le c\Big(\int_{\R^d} |\Gamma u(x)|^2\,\sum_k \chi_{C_k}(x)\, dx\Big)^{{p_1}/2}\\
&\le c\Big(\int_{\R^d} |\Gamma u(x)|^2\, dx\Big)^{{p_1}/2}.
\end{align*}
Also,
$$\int_{C_k} |u_{C_k}|^{p_1}=c|u_{C_k}|^{p_1}\le c\Big(\int_{C_k} |u|^2\Big)^{p_1/2}$$
by Jensen's inequality. Similarly to the above, 
$$\sum_{k} \int_{C_k} |u_{C_k}|^{p_1}\le c\Big(\int_{\R^d} u^2\Big)^{p_1/2}.$$
Hence
\begin{align*}
\int |u|^{p_1}\le \sum_k \int_{C_k} |u|^{p_1}
&\le c\sum_k \int_{C_k} |u-u_{C_k}|^{p_1}+\sum_k \int_{C_k} |u_{C_k}|^{p_1}\\
&\le c\Big(\int |\Gamma u|^2\Big)^{{p_1}/2}
+ c\Big(\int u^2\Big)^{{p_1}/2}.
\end{align*}
Taking ${p_1}^{th}$ roots, we have
$$\norm{u}_{L^{p_1}(\R^d)}\le c\norm{\Gamma u}_{L^2(\R^d)}+c\norm{u}_{L^2(\R^d)}.$$

If $2\le r\le {p_1}$, there exists $\theta\in [0,1]$ depending only
on $r$ and $p_1$ such that $\norm{u}_{L^r}
\le \norm{u}_{L^2}^\theta \norm{u}_{L^{p_1}}^{1-\theta}$; see, e.g.,
Proposition 6.10 of \cite{Folland}.
Combining with the inequality
$a^\th b^{1-\th}\le a+b$ yields
$$\norm{u}_{L^{r}}\le \norm{u}_{L^2}+ \norm{u}_{L^{p_1}}.$$
 We thus obtain
$$\norm{u}_{L^{r}(\R^d)}\le c\norm{\Gamma u}_{L^2(\R^d)}+c\norm{u}_{L^2(\R^d)}.$$
Applying this with $r $ first equal to $p$ and then with $r$ equal to
$2p/(4-p)$ and using Proposition \ref{THE2.2}, we obtain (1).

Suppose now that $u\in \sD({\sL})$ and that $h={\sL} u$. Let $\{E_\lam\}$ be
the spectral resolution of the operator $-{\sL}$. Then for $u\in L^2$,
$$u=\int_0^\infty \, dE_\lam u, \qq
\norm{u}_{L^2(\R^d)} =\int_0^\infty \, d(E_\lam u, E_\lam u).$$
If $u\in \sD({\sL})$ and $h={\sL}u$, then 
$$h=\int_0^\infty \lam \, dE_\lam u,\qq
\norm{h}_{L^2(\R^d)}=\int_0^\infty \lam^2\, d(E_\lam u, E_\lam u).$$
It then follows that
\begin{align*}
\norm{\Gamma u}_{L^2(\R^d)}^2&=\sE(u,u)\\
&=\int_0^\infty \lam\, d(E_\lam u,E_\lam u)\\
&=\int_0^1 \lam\, d(E_\lam u,E_\lam u)
+\int_1^\infty \lam\, d(E_\lam u,E_\lam u)\\
& \le \int_0^1 \, d(E_\lam u,E_\lam u)
+\int_1^\infty \lam^2\, d(E_\lam u,E_\lam u)\\
&\le \norm{u}^2_{L^2(\R^d)}+ \norm{h}^2_{L^2(\R^d)}.
\end{align*}
This and (1)  prove (2).
\end{proof}

\section{Strong stability}

Let
$$G(x)=\sup_{y\in \R^d}|\wt{A}(x,y)-A(x,y)|.$$

\begin{theorem} \label{THE3.2} Suppose $d>\al$. There exist 
$q\ge 2d/\al$ and 
a constant $c_1$ depending on $\Lambda,d,\alpha$, and $q$ such that
 if $f\in L^2(\R^d)$, then 
\bee\label{T5.1E1}
\|P_tf-\wt{P_t}f\|_{L^2}^2\le c_1\ (t^{-\frac12}+t^{\frac12})\|G\|_{L^{2q}}\norm{f}_{L^2}^2.
\eee
\end{theorem}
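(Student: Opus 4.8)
The plan is to estimate $P_t f - \wt P_t f$ by a standard energy/semigroup interpolation argument, where the difference is driven by the difference of the Dirichlet forms $\sE$ and $\wt\sE$, and the latter is controlled pointwise by $G$. First I would write $w_s = P_s f - \wt P_s f$ and compute, using Lemma \ref{Grandlemma}(3) applied to both semigroups,
\[
\frac{d}{ds}\,(P_s f - \wt P_s f,\; P_{t-s}f)_{\phantom{2}} ,
\]
or more directly the identity $P_t f - \wt P_t f = -\int_0^t \frac{d}{ds}\big(\wt P_{t-s} P_s f\big)\, ds = \int_0^t \wt P_{t-s}(\sL - \wt\sL) P_s f\, ds$ in the weak sense, i.e.
\[
(P_t f - \wt P_t f,\, g) = \int_0^t \big[\wt\sE(\wt P_{t-s} g,\, P_s f) - \sE(\wt P_{t-s} g,\, P_s f)\big]\, ds
\]
for $g \in \sF$, which follows by differentiating $s \mapsto (\wt P_{t-s} P_s f, g)$ and using parts (2)–(3) of Lemma \ref{Grandlemma}. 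Taking $g = P_t f - \wt P_t f$ reduces the problem to bounding $\sE(v,w) - \wt\sE(v,w)$ for $v = \wt P_{t-s} w$ and $w = P_s f$, where from the definition \eqref{dfno} one has
\[
|\sE(v,w) - \wt\sE(v,w)| \le \int_{\R^d}\int_{\R^d} \frac{|v(y)-v(x)|\,|w(y)-w(x)|}{|x-y|^{d+\al}}\,|A(x,y)-\wt A(x,y)|\, dy\, dx.
\]

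The next step is the key point where the Meyers inequality enters. Using $|A-\wt A|(x,y) \le \min(G(x), G(y)) \le G(x)^{1/2}G(y)^{1/2}$ (by symmetry of the integrand one can symmetrize), split the kernel and apply Cauchy–Schwarz in the measure $|x-y|^{-d-\al}dy\,dx$ to get a bound of the form
\[
\Big(\int G(x)\,\Gamma v(x)^2\, dx\Big)^{1/2}\Big(\int G(y)\,\Gamma w(y)^2\, dy\Big)^{1/2},
\]
and then Hölder with exponents $p/2$ and its conjugate — where $p>2$ is the Meyers exponent from Theorem \ref{meyers} — yields the factor $\|G\|_{L^{q}}$ with $q = (p/2)' = p/(p-2)$; the hypothesis $q \ge 2d/\al$ is just the statement that $p$ can be taken close enough to $2$, or rather that the admissible range of $q$ is $[q_0,\infty)$ for the corresponding $q_0$. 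Thus $|\sE(v,w)-\wt\sE(v,w)| \le c\,\|G\|_{L^q}\,\|\Gamma v\|_{L^p}\,\|\Gamma w\|_{L^p}$ (one $G$ factor split as $G = G^{1/2}\cdot G^{1/2}$, each paired via Hölder, giving the $L^{2q}$ norm of $G^{1/2}$, i.e. $\|G\|_{L^q}^{1/2}\|G\|_{L^q}^{1/2}$ — I will track the exact bookkeeping, which is where $\|G\|_{L^{2q}}$ vs $\|G\|_{L^q}$ gets pinned down). Now apply Theorem \ref{meyers}(2) to both $v = \wt P_{t-s}w$ and $w = P_s f$, since $P_s f \in \sD(\sL)$ for $s>0$ by Lemma \ref{Grandlemma}(2) and likewise for $\wt P$; this gives $\|\Gamma P_s f\|_{L^p} \le c(\|\sL P_s f\|_{L^2} + \|P_s f\|_{L^2}) \le c(s^{-1}\|f\|_{L^2} + \|f\|_{L^2})$ using the analyticity bound $\|\sL P_s f\|_{L^2}\le c s^{-1}\|f\|_{L^2}$, and similarly for $v$, with $t-s$ in place of $s$ — and the $L^2$ norm of $w = P_s f - \wt P_s f$ appears on the right, absorbing one copy into the left side.

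Assembling: $\|w_t\|_{L^2}^2 \le c\|G\|_{L^{2q}}\int_0^t (\,(t-s)^{-1}+1)^{1/2}(s^{-1}+1)^{1/2}\,ds\;\|f\|_{L^2}^2\cdot\|w_t\|_{L^2}\big/\|w_t\|_{L^2}$ — more carefully, the bilinear estimate produces $\|\Gamma v\|_{L^p}$ with $v$ built from $w_{t-s}$-type terms; I will instead use the cleaner route where one factor is $\Gamma$ of $P_sf$ or $\wt P_s f$ (both in $\sD(\sL)$, estimated by $\|f\|_{L^2}$ terms) and the other is $\Gamma$ of $\wt P_{t-s}(w_t)$ estimated by $\|\sL \wt P_{t-s} w_t\|_{L^2}+\|w_t\|_{L^2}$, so that after integrating $\int_0^t ((t-s)^{-1/2}+ (t-s)^{1/2})\,ds$ one gets the factor $t^{1/2} + t^{3/2}$, and dividing through by $\|w_t\|_{L^2}$ — wait, this needs care since $w_t$ appears linearly not quadratically; the correct final step is that the integral identity is linear in $g$, so choosing $g=w_t$ gives $\|w_t\|_{L^2}^2$ on the left and one factor $\|w_t\|_{L^2}$ (or $\|\Gamma$-type norm controlled by it) on the right, leaving $\|w_t\|_{L^2} \le c\,\|G\|_{L^{2q}}(t^{-1/4}+t^{1/4})^2\|f\|_{L^2}$, i.e. the claimed $(t^{-1/2}+t^{1/2})$ after squaring. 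The main obstacle I anticipate is precisely this bookkeeping at the end — arranging which semigroup factor carries the $L^2$-difference $w_t$ and which carries the $\sD(\sL)$-regularity so that the time integral $\int_0^t$ of the product of the two $\|\sL P_\cdot\|$-type singularities (each like $s^{-1}$, or $s^{-1/2}$ after taking square roots inside Meyers) is integrable and produces exactly $t^{-1/2}+t^{1/2}$; a naive split gives $\int_0^t s^{-1}(t-s)^{-1}ds = \infty$, so one must put the full derivative onto one side only (e.g. differentiate $\wt P_{t-s}P_s f$ and keep $P_s f$, not $\wt P_{t-s}$, carrying a generator), which makes the integrand $\sim \|\sL P_s f\|_{L^2}\sim s^{-1}$ — still non-integrable at $0$ — so in fact one should pair it against $\Gamma \wt P_{t-s} w_t$ whose Meyers bound involves $\|w_t\|_{L^2}$ and $\|\sL\wt P_{t-s}w_t\|_{L^2}\le (t-s)^{-1}\|w_t\|_{L^2}$, and use instead the energy bound $\sE(P_sf,P_sf)^{1/2}\le c s^{-1/2}\|f\|_{L^2}$ from Lemma \ref{Grandlemma}(1) on the other factor via Theorem \ref{meyers}(1), making the integrand $\sim s^{-1/2}\cdot\big((t-s)^{-1}+1\big)$-type but with the singular factor only appearing to a power that keeps $\int_0^t$ finite and of order $t^{1/2}+t^{3/2}$; resolving this tradeoff is the heart of the argument and the constant $q\ge 2d/\al$ is what makes the Meyers exponent available in the needed range.
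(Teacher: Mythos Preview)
Your overall framework (interpolate via $s\mapsto P_s\wt P_{t-s}f$, express the difference of forms in terms of $A-\wt A$, use H\"older and Meyers) matches the paper's. But the place you yourself flag as ``the heart of the argument'' --- making the time integral finite --- is a genuine gap in your proposal, and the paper resolves it by two choices that you do not make.

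First, the Cauchy--Schwarz step is \emph{asymmetric}, not the symmetric $G(x)^{1/2}G(y)^{1/2}$ split you propose. The paper writes $(A-\wt A)(x,y)\le G(x)$ and applies Cauchy--Schwarz so that one factor is simply $\wt\sE(\wt P_{t-s}f,\wt P_{t-s}f)^{1/2}\le c(t-s)^{-1/2}\|f\|_{L^2}$ (no $G$, no Meyers needed here), while the other factor is $\big(\int G(x)^2\,|\Gamma(P_s u)(x)|^2\,dx\big)^{1/2}$, to which H\"older with exponents $p'=p/2$ and $q'=(p')'$ gives $\|G\|_{L^{2q'}}\|\Gamma(P_su)\|_{L^p}$. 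So Meyers is applied only once, to $P_s u$.

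Second --- and this is what cures the divergent integral --- the paper does \emph{not} use the crude bound $\|\sL P_s u\|_{L^2}\le cs^{-1}\|u\|_{L^2}$. Instead it exploits that $u=P_tf-\wt P_tf$ is already in $\sF$ with $\sE(u,u)\le ct^{-1}\|f\|_{L^2}^2$ (by Lemma~\ref{Grandlemma}(1) applied to $P_t$ and $\wt P_t$). Writing $\sL P_s u=(-\sL)^{1/2}P_{s/2}\,(-\sL)^{1/2}P_{s/2}u$ and using Lemma~\ref{Grandlemma}(1),(4) gives
\[
\|\sL P_s u\|_{L^2}\le cs^{-1/2}\sE(P_{s/2}u,P_{s/2}u)^{1/2}\le cs^{-1/2}\sE(u,u)^{1/2}\le c(st)^{-1/2}\|f\|_{L^2}.
\]
Together with Theorem~\ref{meyers}(2) this yields $\|\Gamma(P_su)\|_{L^p}\le c\big(1+(st)^{-1/2}\big)\|f\|_{L^2}$, and the resulting time integral $\int_0^t (t-s)^{-1/2}\big(1+(st)^{-1/2}\big)\,ds$ is finite and equals $c(t^{1/2}+t^{-1/2})$. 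Your symmetric split forces Meyers on both factors and leads exactly to the non-integrable $\int_0^t s^{-1}(t-s)^{-1}\,ds$ you noticed; the asymmetric split plus the $\sE(u,u)$ trick is what you are missing.
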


\begin{proof} For $t> 0$, let $u= P_tf- \wt{P}_tf$. By Lemma \ref{Grandlemma}(1), we know that $P_tf$ and $\wt{P}_tf$ are both in $\sF= W^{\frac{\alpha}{2},2}(\R^d)$, so
$u\in W^{\frac{\alpha}{2},2}(\R^d)$.

We write
\begin{align*}
\|P_tf-\wt{P}_tf\|_{L^2}^2&= (P_tf-\wt{P}_tf, u)\\
&= \int_0^t\frac{d}{ds}(P_s\wt{P}_{t-s}f, u)\, ds.\\
\end{align*}
This, Lemma \ref{Grandlemma}(3), and 
routine calculations show that
\begin{align}
\|P_tf-\wt{P}_tf\|_{L^2}^2= \int_0^t\Big(-\sE(\wt{P}_{t-s}f,P_su)+ \wt{\sE}(\wt{P}_{t-s}f,P_su)\Big)\, ds.
\label{BE23}
\end{align}

Using \eqref{BE23}, Lemma \ref{Grandlemma}(1) and H\"older's inequality, we obtain
{\allowdisplaybreaks
\begin{align}
\|P_tf&-\wt{P_t}f\|_{L^2}^2\nn\\
&= \int_0^t\Big(-\sE(\wt{P}_{t-s}f,P_su)+\wt{\sE}(\wt{P}_{t-s}f,P_su)\Big)\,ds\nn\\
&= \int_0^t\int_{\R^d}\int_{\R^d}\Big(\wt{P}_{t-s}f(y)-\wt{P}_{t-s}f(x)\Big)\Big(P_su(y)-P_su(x)\Big)\nn\\
&\qq \times\frac{\wt{A}(x,y)-A(x,y)}{|x-y|^{d+\alpha}}\, dy\, dx\, ds\nn\\
&\le  c\int_0^t\Big[\int_{\R^d}\int_{\R^d}\Big(\wt{P}_{t-s}f(y)-\wt{P}_{t-s}f(x)\Big)^2\frac{1}{|x-y|^{d+\alpha}}\, dy\, dx\Big]^{\frac12}\nn\\
& \qq\times\Big[\int_{\R^d}\int_{\R^d}\Big(P_su(y)-P_su(x)\Big)^2\frac{|\wt{A}(x,y)-A(x,y)|^2}{|x-y|^{d+\alpha}}\, dy\, dx\Big]^{\frac{1}{2}}\,ds\nn\\
&\le  c\int_0^t\Big[\int_{\R^d}\int_{\R^d}\Big(\wt{P}_{t-s}f(y)-\wt{P}_{t-s}f(x)\Big)^2\frac{\wt{A}(x,y)}{|x-y|^{d+\alpha}}\, dy\, dx\Big]^{\frac12}\nn\\
& \qq\times\Big[\int_{\R^d}\int_{\R^d}\Big(P_su(y)-P_su(x)\Big)^2\frac{|\wt{A}(x,y)-A(x,y)|^2}{|x-y|^{d+\alpha}}\, dy\, dx\Big]^{\frac{1}{2}}\,ds\nn\\
&\le c \int_0^t\Big[\wt{\sE}(\wt{P}_{t-s}f,\wt{P}_{t-s}f)\Big]^{\frac12}\nn\\
&\qq\times\Big[\int_{\R^d}\int_{\R^d}\frac{\Big(
P_su(y)-P_su(x)\Big)^2}{|x-y|^{d+\alpha}}\, dy\ G^2(x)\, dx\Big]^{\frac12}\,ds\nn\\
&\le c\int_0^t(t-s)^{-\frac{1}{2}}\|f\|_{L^2}\nn\\
& \qq \times\Big\{\int_{\R^d}\Big[\int_{\R^d}\frac{\Big(P_su(y)-P_su(x)\Big)^2}{|x-y|^{d+\alpha}}\, dy\Big]^{p'}\, dx\Big\}^{\frac{1}{2{p'}}}\\
&\qq\times \Big\{\int_{\R^d} G^{2{q'}}(x)\, dx\Big\}^{\frac{1}{2{q'}}}\,ds\nn\\
&= c\norm{f}_{L^2}\norm{G}_{L^{2{q'}}} \int_0^t(t-s)^{-\frac{1}{2}} \|\Gamma (P_su)(x)\|_{L^{2{p'}}}\, ds, \label{E3.1A}
\end{align}
}
where $p'$ and $q'$ are conjugate exponents.

We choose $p'$ so that $2p'$ is equal to the $p$ in Theorem \ref{meyers}(2).
By that
theorem,
\bee\label{E41}
\norm{\Gamma(P_su)}_{L^{2p'}}\le
c\norm{P_su}_{L^2}+c\norm{{\sL}(P_su)}_{L^2}.
\eee

Since
$P_s,P_t,$ and $\wt P_t$ are contractions, 
\bee\label{E42}
\norm{P_su}_{L^2}\le \norm{u}_{L^2}=\norm{P_tf-\wt P_t f}_{L^2}
\le 2\norm{f}_{L^2}.
\eee
To estimate ${\sL}(P_su)$, we note $P_{s/2}u\in \sD({\sL})$ by Lemma \ref{Grandlemma}(2)
and then use Lemma  \ref{Grandlemma}(4).
Then 
\begin{align}
\norm{{\sL}(P_su)}_{L^2}&=\norm{(-{\sL})^{1/2} P_{s/2}(-{\sL})^{1/2} (P_{s/2}u)}_{L^2}
\label{E43}\\
&\le cs^{-1/2} \norm{(-{\sL})^{1/2}(P_{s/2}u)}_{L^2}\nn\\
&=cs^{-1/2}\sE(P_{s/2}u,P_{s/2}u)^{1/2}\nn\\
&\le cs^{-1/2} \sE(u,u)^{1/2}\nn\\
&\le cs^{-1/2}\big[\sE(P_tf,P_tf)^{1/2}+\sE(\wt P_t f , \wt P_tf)^{1/2}\big]\nn\\
&\le c(st)^{-1/2}\norm{f}_{L^2},\nn
\end{align}
where Lemma \ref{Grandlemma}(1) is used in the first and last inequalities.
Combining \eqref{E3.1A}, \eqref{E41}, \eqref{E42}, and \eqref{E43} yields
our result.
\end{proof}

\begin{remark}\label{R-scaling}{\rm
A scaling argument allows one to improve \eqref{T5.1E1} to  
\bee\label{T5.1E2}
\|P_tf-\wt{P_t}f\|_{L^2}^2\le c_1\ t^{-d/2q\al}\|G\|_{L^{2q}}\norm{f}_{L^2}^2.
\eee 
We give a sketch and leave the details to the reader.

If $X_t$ is the strong Markov process whose semigroup is $P_t$, let
$Y_t=aX_{a^{-\al} t}$. Routine calculations shows that the semigroup $Q_t$
for $Y$ is related to that of $X$ by the equation
$$P_tf(x)=Q_{a^\al t}g(ax),$$
where $g(z)=f(z/a)$, and that the Dirichlet form of $Y$ is given by
$$\sE_Y(f,f)=\int_{\R^d}\int_{\R^d} \frac{(f(y)-f(x))^2}{|x-y|^{d+\al}}
B(x,y)\, dy\, dx,$$
where $B(x,y)=A(x/a,y/a)$ and $A$ is the  function in \eqref{dfno}.

Suppose we define $\wt Q_t$ and $\wt B$ in terms of $\wt P_t$ similarly
and let 
$$H(x)=\sup_{y\in \R^d} |B(x,y)-\wt B(x,y)|.$$ Fix $t$ and set $a=t^{-1/a}$
so that $a^\al=t^{-1}$. A straightforward calculation and an application
of Theorem \ref{THE3.2} yield  
$$\norm{P_tf-\wt P_tf}_{L^2}^2=a^{-d} \norm{Q_1g-\wt Q_1g}_{L^2}^2\le ca^{-d}\norm{H}_{L^{2q}}
\norm{g}_{L^2}^2,$$
where $g(z)=f(z/a)$. Further calculations show that
$$\norm{H}_{L^{2q}}=a^{d/2q}\norm{G}_{L^{2q}}$$
and 
$$\norm{g}_{L^2}^2=a^d\norm{f}_{L^2}^2.$$
Combining gives \eqref{T5.1E2}.
}
\end{remark}

Let $p(t,x,y)$ and $\wt{p}(t,x,y)$ be the heat kernels corresponding to $P_t$ and $\wt{P_t}$.
By Theorem 4.14 in \cite{Chen-Kumagai}, we know there exist $\gamma>0$ and
a constant $c_1$ such that
\bee\label{CE1P}
|p(t,x,y)-p(t,z,v)|\le c_1\ t^{-\frac{d+\gamma}{\alpha}}(|x-z|+|y-v|)^{\gamma}
\eee
for all $x,y,z,v\in \R^d$.
By Theorem 1.1 in \cite{Chen-Kumagai}, there exist constants $c_2$ and $c_3$ such that
\begin{align}
c_2\min\Big{\{t^{-\frac{d}{\alpha}},\frac{t}{|x-y|^{d+\alpha}}\Big\}}&\le p(t,x,y)\nn\\
&\le c_3\min{\Big\{t^{-\frac{d}{\alpha}},\frac{t}{|x-y|^{d+\alpha}}\Big\}}
\label{CE1}
\end{align}
for all $x,y\in\R^d$.

We have the following two theorems. Once we have Theorem \ref{THE3.2},
\eqref{CE1P}, and \eqref{CE1},
the proofs are so similar to the corresponding theorems in 
\cite{Chenetal} that we refer the reader to that paper for the proofs.

\begin{theorem}\label{THE4.2A} Let $t> 0$. There exist $q>1$ and  a constant $c_1$
depending on $t, \Lambda, \gamma,  d, \alpha,$ and $q$ such that for any $x, y\in \R^d$
\[|p(t,x,y)- \wt{p}(t,x,y)|\le c_1 \|G\|_{2q}^{\frac{\gamma}{2(d+\gamma)}}.\]
\end{theorem}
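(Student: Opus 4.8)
\textbf{Proof proposal for Theorem \ref{THE4.2A}.}

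The plan is to interpolate between two facts about the difference kernel $w(t,x,y) := p(t,x,y)-\wt p(t,x,y)$: a crude pointwise bound coming from the uniform Gaussian-type estimates \eqref{CE1} and the Hölder estimate \eqref{CE1P}, and a fine $L^2$-type bound coming from Theorem \ref{THE3.2}. Fix $t>0$ and $x,y\in\R^d$. First I would record that, by \eqref{CE1} applied to both kernels, $\|w(t,x,\cdot)\|_{L^\infty}\le c$ (with $c$ depending on $t,\Lambda,d,\al$), and by \eqref{CE1P} (again for both $p$ and $\wt p$) one has the equicontinuity estimate $|w(t,x,z)-w(t,x,v)|\le c(|z-v|)^\gamma$ for $z,v$ in a fixed neighborhood; the same holds in the first spatial variable by symmetry of the heat kernels. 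These two facts together give $|w(t,x,y)|\le c\,r^{-d}\int_{B(y,r)}|w(t,x,z)|\,dz + c\,r^\gamma$ for every $r\in(0,1]$, a standard ``oscillation plus average'' device.

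Next I would estimate the average $r^{-d}\int_{B(y,r)}|w(t,x,z)|\,dz$. The natural route is duality: $\int_{B(y,r)} w(t,x,z)\,g(z)\,dz = (P_t\mathbf 1_{B(y,r)}g\ ?\ )$ — more precisely, using $p(t,x,z)=p(t,z,x)$ one writes $\int w(t,x,z)g(z)\,dz = (P_tg - \wt P_t g)(x)$ for $g=\sgn(w(t,x,\cdot))\mathbf 1_{B(y,r)}$, but since we only control $P_tg-\wt P_tg$ in $L^2$ we instead split $P_t = P_{t/2}P_{t/2}$, write $(P_t g-\wt P_t g)(x)$ in terms of the $L^2$ function $P_{t/2}g-\wt P_{t/2}g$ paired against the bounded, in fact $L^2\cap L^\infty$, kernel $p(t/2,x,\cdot)$ (resp. $\wt p(t/2,x,\cdot)$), together with a cross term $\int(p(t/2,x,z)-\wt p(t/2,x,z))\wt P_{t/2}g(z)\,dz$. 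Each of these is bounded, via Cauchy–Schwarz, by $c(t)\,\|P_{t/2}g-\wt P_{t/2}g\|_{L^2}$ or $c(t)\|w(t/2,x,\cdot)\|_{L^2}$, and by Theorem \ref{THE3.2} (with its exponent $q\ge 2d/\al$) both of these are at most $c(t)\,\|G\|_{L^{2q}}^{1/2}\,\|g\|_{L^2}\le c(t)\,\|G\|_{L^{2q}}^{1/2}\,r^{d/2}$. Hence the average over $B(y,r)$ is bounded by $c(t)\,r^{-d/2}\|G\|_{L^{2q}}^{1/2}$.

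Putting the pieces together gives, for every $r\in(0,1]$,
\begin{align}
|w(t,x,y)|\le c(t)\,r^{-d/2}\,\|G\|_{L^{2q}}^{1/2} + c(t)\,r^{\gamma}. \nonumber
\end{align}
Optimizing in $r$, i.e. choosing $r$ so that the two terms balance, namely $r = \big(\|G\|_{L^{2q}}^{1/2}\big)^{1/(\gamma+d/2)}$ (and noting that if this is $>1$ then $\|G\|_{L^{2q}}$ is bounded below and the trivial bound $|w|\le c(t)$ already gives the claim with an adjusted constant), yields $|w(t,x,y)|\le c(t)\,\|G\|_{L^{2q}}^{\gamma/(2\gamma+d)}$. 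Since $\gamma/(2\gamma+d)=\tfrac{\gamma}{2(d+\gamma)}\cdot\tfrac{2(d+\gamma)}{2\gamma+d}$ — one should simply recompute the exponent carefully from the balance equation; the stated exponent $\gamma/(2(d+\gamma))$ is exactly what comes out when one tracks constants through the splitting $P_t=P_{t/2}P_{t/2}$ and the use of \eqref{E43}-type bounds on $\|\sL P_{t/2}g\|_{L^2}$, since the relevant average involves $w(t/2,x,\cdot)$ paired not just with a bounded kernel but effectively with $\Gamma$-type quantities controlled by Theorem \ref{meyers}(2). The main obstacle I anticipate is precisely the bookkeeping of exponents: getting $\gamma/(2(d+\gamma))$ rather than $\gamma/(2\gamma+d)$ requires being careful about whether the $L^2$ gain from Theorem \ref{THE3.2} is used on $w$ itself or, after one more semigroup step and an application of Theorem \ref{meyers}(2), on the $\Gamma$-derivative, which changes the power of $r$ in the average estimate from $r^{-d/2}$ to $r^{-d}$. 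As the authors note, this computation parallels the corresponding argument in \cite{Chenetal}, and I would follow that template for the precise constants.
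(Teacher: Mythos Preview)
Your general strategy---interpolate an $L^2$ smallness bound from Theorem~\ref{THE3.2} against the H\"older estimate \eqref{CE1P}---is exactly the right one, and it is the template from \cite{Chenetal} that the paper points to. But there is a genuine gap in the way you implement it, and it is precisely the ``cross term'' you flag. When you split $P_t-\wt P_t$ and try to bound $\int w(t/2,x,z)\,\wt P_{t/2}g(z)\,dz$ by Cauchy--Schwarz, you need $\|w(t/2,x,\cdot)\|_{L^2}$ to be of order $\|G\|_{2q}^{1/2}$. Theorem~\ref{THE3.2} does \emph{not} give this: it controls $\|P_sf-\wt P_sf\|_{L^2}$ for $f\in L^2$, which is a statement about the $L^2\to L^2$ operator norm of $P_s-\wt P_s$, not about the $L^2$ norm of a single row $p(s,x,\cdot)-\wt p(s,x,\cdot)$ of the kernel. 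Writing $p(s,x,\cdot)=P_{s/2}\,p(s/2,x,\cdot)$ and iterating only reproduces the same problem at scale $s/2$, and the recursion does not close because the constants in Theorem~\ref{THE3.2} blow up as $s\downarrow 0$. This is also why your exponent bookkeeping does not come out: the approach via one-variable averaging would, if it worked, give the \emph{better} exponent $\gamma/(2\gamma+d)$, which is already a sign that something is being overclaimed.

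The clean fix---and what the argument in \cite{Chenetal} actually does---is to average in \emph{both} spatial variables and to keep the integral \emph{signed}. Since \eqref{CE1P} gives H\"older continuity of $w(t,\cdot,\cdot)$ jointly, for every $r\in(0,1]$ one has
\[
\Big| \, w(t,x,y) - \frac{1}{|B(x,r)|\,|B(y,r)|}\int_{B(x,r)}\int_{B(y,r)} w(t,x',z)\,dz\,dx' \, \Big| \le c(t)\,r^\gamma.
\]
Now the double integral is exactly $\big(\chi_{B(x,r)},\,(P_t-\wt P_t)\chi_{B(y,r)}\big)$, so by Cauchy--Schwarz and Theorem~\ref{THE3.2} it is bounded by
\[
\|\chi_{B(x,r)}\|_{L^2}\,\|(P_t-\wt P_t)\chi_{B(y,r)}\|_{L^2}
\le c\,r^{d/2}\cdot c(t)\,\|G\|_{2q}^{1/2}\,r^{d/2}
= c(t)\,r^{d}\,\|G\|_{2q}^{1/2}.
\]
Dividing by $|B(x,r)|\,|B(y,r)|\asymp r^{2d}$ gives $c(t)\,r^{-d}\,\|G\|_{2q}^{1/2}$, so
\[
|w(t,x,y)|\le c(t)\,r^{-d}\,\|G\|_{2q}^{1/2} + c(t)\,r^\gamma,
\]
and optimizing in $r$ (taking $r^{d+\gamma}=\|G\|_{2q}^{1/2}$, with the trivial bound from \eqref{CE1} covering the case $\|G\|_{2q}\ge 1$) produces precisely the stated exponent $\gamma/(2(d+\gamma))$. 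No splitting of the semigroup, no appeal to Theorem~\ref{meyers}(2), and no cross term.
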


\ms

\begin{theorem}\label{THE4.3A} Let  $t>0$. There exist $q>1$ and 
 a constant $c_2$ depending on $t, \Lambda, \gamma,  d,\alpha$, and $q$ such that for any $p\in [1,\infty]$, we have
\[\|P_tf-\wt{P}_tf\|_{L^p}\le c_2 \|G\|_{2q}^{\frac{\gamma \alpha}{2(d+\gamma)(d+\alpha)}}\norm{f}_{L^p}.\]
\end{theorem}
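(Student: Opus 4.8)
The statement in question is Theorem \ref{THE4.3A}, the $L^p$ stability estimate for the semigroups. The paper itself defers its proof to \cite{Chenetal}; below I sketch how that argument would go, building on Theorem \ref{THE3.2}, the heat kernel bounds \eqref{CE1P}--\eqref{CE1}, and Theorem \ref{THE4.2A}.

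\medskip

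\textbf{Plan of proof.} The strategy is interpolation: first establish the two endpoint cases $p=1$ and $p=\infty$, then invoke the Riesz--Thorin theorem. The $p=\infty$ bound is essentially immediate from Theorem \ref{THE4.2A}. Indeed, for $f\in L^\infty$,
\[
|P_tf(x)-\wt P_tf(x)|\le \int_{\R^d}|p(t,x,y)-\wt p(t,x,y)|\,|f(y)|\,dy
\le \norm{f}_{L^\infty}\int_{\R^d}|p(t,x,y)-\wt p(t,x,y)|\,dy,
\]
so one needs an $L^1$-in-$y$ bound on the kernel difference. This is where the two-sided Gaussian-type bound \eqref{CE1} and the H\"older-in-space bound \eqref{CE1P} are combined: for $|x-y|$ small one uses the pointwise bound from Theorem \ref{THE4.2A}; for $|x-y|$ large one uses that both $p$ and $\wt p$ are individually dominated by $c\,t/|x-y|^{d+\alpha}$, which is integrable in $y$ at infinity and produces a small tail. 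Optimizing the split radius against $\norm{G}_{2q}^{\gamma/2(d+\gamma)}$ yields
\[
\int_{\R^d}|p(t,x,y)-\wt p(t,x,y)|\,dy\le c\,\norm{G}_{2q}^{\gamma\alpha/2(d+\gamma)(d+\alpha)},
\]
the extra factor $\alpha/(d+\alpha)$ coming precisely from balancing the small-ball contribution (size $\lesssim r^d \cdot \norm{G}_{2q}^{\gamma/2(d+\gamma)}$) against the tail (size $\lesssim t\cdot r^{-\alpha}$). The $p=1$ case is symmetric: by Fubini and symmetry of the kernels (both $p$ and $\wt p$ are symmetric, being heat kernels of symmetric Dirichlet forms), $\int|p(t,x,y)-\wt p(t,x,y)|\,dx$ obeys the same estimate, and $\norm{P_tf-\wt P_tf}_{L^1}\le \big(\sup_y\int|p(t,x,y)-\wt p(t,x,y)|\,dx\big)\norm{f}_{L^1}$.

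\medskip

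Once both endpoints hold with the same constant $c_2\norm{G}_{2q}^{\gamma\alpha/2(d+\gamma)(d+\alpha)}$, the operator $P_t-\wt P_t$ is bounded on $L^1$ and on $L^\infty$ with that norm; by the Riesz--Thorin interpolation theorem it is bounded on $L^p$ for every $p\in[1,\infty]$ with the same bound, giving exactly the stated inequality. One should double-check the $p=\infty$ endpoint is genuinely attained and not merely $p<\infty$ (the kernel estimate above is uniform in $x$, so this is fine), and that the dependence of $c_2$ on $t,\Lambda,\gamma,d,\alpha$ is as claimed — all constants entering \eqref{CE1P}, \eqref{CE1}, and Theorem \ref{THE4.2A} depend only on these parameters (and $q$), and the interpolation step introduces nothing new.

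\medskip

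\textbf{Main obstacle.} The delicate point is not the interpolation but the derivation of the $L^1$-in-$y$ kernel bound with the correct exponent. Theorem \ref{THE4.2A} gives only a pointwise bound with exponent $\gamma/2(d+\gamma)$; upgrading to an integrated bound while losing only the factor $\alpha/(d+\alpha)$ requires carefully exploiting the off-diagonal decay $t/|x-y|^{d+\alpha}$ of the individual kernels and optimizing the cutoff radius $r$ as a small power of $\norm{G}_{2q}$. Getting the exponent exactly $\gamma\alpha/2(d+\gamma)(d+\alpha)$ — rather than something worse — is the crux, and is the step where following \cite{Chenetal} closely pays off. Everything else is bookkeeping with the stated ingredients.
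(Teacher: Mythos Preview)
Your proposal is correct and matches what the paper does: it explicitly defers the proof to \cite{Chenetal}, and the argument you sketch --- pointwise kernel bound from Theorem \ref{THE4.2A} on a ball of radius $r$, the tail bound $|p|+|\wt p|\le c\,t/|x-y|^{d+\alpha}$ from \eqref{CE1} outside, optimization in $r$ to produce the exponent $\gamma\alpha/2(d+\gamma)(d+\alpha)$, duality via kernel symmetry for $p=1$, and Riesz--Thorin interpolation --- is precisely the route taken there (adapted from the diffusion setting to the stable-like one). The optimization calculation you indicate indeed yields the stated exponent, and the only cosmetic omission is the observation that for $\|G\|_{2q}$ not small the bound follows trivially from $\|P_tf-\wt P_tf\|_{L^p}\le 2\|f\|_{L^p}$.
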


As in Remark \ref{R-scaling}, one could use scaling        
to obtain an explicit bound on how the constants
depend on $t$. We leave this to the interested reader.

\section{Proof of Lemma \ref{Grandlemma}}\label{proofoflemma}

In this section we give a proof of the lemma stated in  Section  \ref{prelim}.

Let $\{E_\lam\}$, $\lam\ge 0$, be the spectral representation of $-{\sL}$.
For $f\in \sF$, we have 
$$\sE(f,f)=\int_0^\infty \lam\, d(E_\lam f, E_\lam f);$$
see \cite{FOT}.

\begin{proof}[Proof of Lemma \ref{Grandlemma}]
(1) This follows from
\begin{align*}
\sE(P_tf,P_tf)&=\int_0^\infty \lam e^{-2\lam t}
\, d(E_\lam f, E_\lam f)\\
&\le ct^{-1}\int_0^\infty \, d(E_\lam f, E_\lam f)=ct^{-1}
\norm{f}_2^2,
\end{align*}
since $\lam e^{-2\lam t}\le ct^{-1}$ for all $\lam\ge 0$.

(2) By the spectral representation of $-{\sL}$, 
we have
$$\frac{P_h(P_tg)-P_tg}{h}=\frac{P_{t+h}g-P_tg}{h}=\int_0^\infty
\frac{e^{-\lam(t+h)}-e^{-\lam t}}{h}\, dE_\lam g.$$
Let $H=-\int_0^\infty \lam e^{-\lam t}\, dE_\lam g$. Note 
$\norm{H}_{L^2}$ is finite because $\lam^2 e^{-2\lam t}$ is bounded.
Then
$$\Big\| \frac{P_h(P_tg)-P_tg}{h}-H\Big\|_{L^2}^2
=\int_0^\infty \Big[ \frac{ e^{-\lam(t+h)}-e^{-\lam t}}{h}+\lam e^{-\lam t}
\Big]^2\, d(E_\lam g, E_\lam g),$$
which tends to 0 as $h\to 0$ by dominated convergence. Therefore
$P_tg\in\sD({\sL})$ and ${\sL}(P_tg)=H$.

(3)  For any $g\in \sF$, we have
\[(P_tf, g)= \int_0^{\infty}e^{-\lambda t}\, d(E_{\lambda}f, g),\]
and so
\[\frac{d}{dt}(P_tf, g)= -\int_0^{\infty}\lambda e^{-\lambda t}\, d(E_{\lambda}f, g).\]
On the other hand,
\[\sE(P_tf, g)= \int_0^{\infty}\lambda \,d(E_\lambda P_tf, g)= \int_0^{\infty}\lambda e^{-\lambda t}\, d(E_{\lambda}f, g),\]
which proves the assertion.

(4) We prove this by writing
$$\int_0^\infty \lam e^{-2\lam t}\, d(E_\lam f, E_\lam f)
\le \int_0^\infty \lam \, d(E_\lam f, E_\lam f),$$
which translates to \eqref{Pr44}.
\end{proof}

\ni {\bf Richard F. Bass}\\
Department of Mathematics\\
University of Connecticut \\
Storrs, CT 06269-3009, USA\\
{\tt r.bass@uconn.edu}
\ms

\ni {\bf Hua Ren}\\
Department of Mathematics\\
University of Connecticut \\
Storrs, CT 06269-3009, USA\\
{\tt hua.ren@uconn.edu}
\ms

\end{document}